\numberwithin{equation}{section}
\newtheorem{lemma}{Lemma}[section]
\newtheorem{theorem}{Theorem}[section]
\newtheorem{remark}{Remark}
\begin{document}

%%%%%%%%%%%%%%%%%%%%%%%%%%%%%%%%%%%%%%%%%%%%%%%%%%%%%%%%%%%%%%%%%%%%%%%%%%%%%%%%%%%%%%%%%

\begin{frontmatter}
\def\ll{\mbox{ } \hskip 1em}
\def\matrix{\,\vcenter\bgroup\plainLet@\plainvspace@
    \normalbaselines
   \math\ialign\bgroup\hfil$##$\hfil&&\quad\hfil$##$\hfil\crcr
      \mathstrut\crcr\noalign{\kern-\baselineskip}}
\def\mathbb{}

\title{Second-order accurate numerical scheme with graded meshes for the nonlinear partial integrodifferential equation arising from viscoelasticity}
%% or include affiliations in footnotes:

\author[mymainaddress1]{Wenlin Qiu}  
% \ead{qwllkx12379@163.com}
\author[mymainaddress1]{Xu Xiao}
% \ead{xiaoxu961004@gmail.com}
\author[mymainaddress1]{Kexin Li\corref{mycorrespondingauthor}}
\ead{likexin@hunnu.edu.cn}

\cortext[mycorrespondingauthor]{Corresponding author.}
\address[mymainaddress1]{Key Laboratory of Computing and Stochastic Mathematics (Ministry of Education), School of Mathematics and Statistics, Hunan Normal University, Changsha, Hunan 410081, P. R. China}

\begin{abstract}
 This paper establishes and analyzes a second-order accurate numerical scheme for the nonlinear partial integrodifferential equation with a weakly singular kernel. In the time direction, we apply the Crank-Nicolson method for the time derivative, and the product-integration (PI) rule is employed to deal with Riemann-Liouville fractional integral. From which, the non-uniform meshes are utilized to compensate for the singular behavior of the exact solution at $t=0$ so that our method can reach second-order convergence for time. In order to formulate a fully discrete implicit difference scheme, we employ a standard centered difference formula for the second-order spatial derivative, and the Galerkin method based on piecewise linear test functions is used to approximate the nonlinear convection term. Then we derive the existence and uniqueness of numerical solutions for the proposed implicit difference scheme. Meanwhile, stability and convergence are proved by means of the discrete energy method. Furthermore, to demonstrate the effectiveness of the proposed method, we utilize a fixed point iterative algorithm to calculate the discrete scheme. Finally, numerical experiments illustrate the feasibility and efficiency of the proposed scheme, in which numerical results are consistent with our theoretical analysis.
\end{abstract}

\begin{keyword}
 Nonlinear partial integrodifferential equation \sep second-order accurate difference scheme \sep non-uniform meshes \sep product-integration rule \sep existence and uniqueness \sep stability and convergence.
\end{keyword}

\end{frontmatter}

\noindent
\textbf{ AMS subject classification (2020).}{\,\,\small 26A33, 45K05, 65M12, 65M22, 65M60}

%%%%%%%%%%%%%%%%%%%%%%%%%%%%%%%%%%%%%%%%%%%%%%%%%%%%%%%%%%%%%%%%%%%%%%%%%%%%%%%%%%%%%%%%%%%

\section{Introduction}

\vskip 0.5mm
   In this work, we shall consider a nonlinear partial integrodifferential equation with a weakly singular kernel
   \begin{equation}\label{eq1.1}
   \begin{array}{ll}
     u_t(x,t) + u(x,t)u_x(x,t)   - \mathcal{I}^{(\alpha)}u_{xx}(x,t)  =  f(x,t), \qquad x\in (0,L), \qquad t \in (0,T],
   \end{array}
   \end{equation}
which subjects to the following boundary conditions
   \begin{equation}\label{eq1.2}
   \begin{array}{ll}
     u(0,t)=u(L,t)=0,  \qquad t \in (0,T],
   \end{array}
   \end{equation}
and the initial condition
   \begin{equation}\label{eq1.3}
    \begin{array}{ll}
    u(x,0)=u_0(x), \qquad x\in [0,L],
   \end{array}
   \end{equation}
where the Riemann-Liouville (R-L) fractional integral is denoted by \cite{Podlubny}
   \begin{equation}\label{eq1.4}
    \begin{array}{ll}
     \mathcal{I}^{(\alpha)}\varpi(t):= \int_0^t \beta(t-\zeta) \varpi(\zeta) d\zeta, \qquad  0 < \alpha < 1,
   \end{array}
  \end{equation}
from which, Abel kernel $\beta(t)=t^{\alpha-1}/\Gamma(\alpha)$ and $\Gamma(q)=\int_{0}^{+\infty}s^{q-1}\exp(-s)ds$ is the Euler's Gamma function.

\vskip 0.5mm
Sanz-Serna \cite{Sanz} pointed out that \eqref{eq1.1} affords a simple model which combines the Eulerian derivative, that is, $\partial u(x,t)/\partial t +u(x,t)\partial u(x,t)/\partial x$, with a viscoelastic effect, just like Burgers equation \cite{Burgers} provides a simple model for the studies of more realistic situations, involving Eulerian derivatives and viscous forces. In addition, problem \eqref{eq1.1}-\eqref{eq1.3} can model the physical phenomena, which involves the viscoelastic forces, population dynamics, viscous plasticity problems, heat transfer materials with memory, nuclear reaction theory and so on \cite{Olm,Podlubny,Sanz,Sheng}. Equation \eqref{eq1.1} has the significant applications in the fields of engineering and science, and it is still worthy of our further study. However, there are no analytic solutions for the problem \eqref{eq1.1}-\eqref{eq1.3}, thus we have to yield its numerical solutions.

\vskip 0.5mm
  In fact, for problem \eqref{eq1.1}-\eqref{eq1.3}, many studies of numerical fields have been considered in recent years, especially finite difference methods. First, Lopez-Marcos \cite{Lopez-Marcos} developed a backward Euler (BE) scheme for the time derivative, and the R-L integral term was approximated by the first-order convolution quadrature rule. Then, Tang \cite{Tang} considered this problem by utilizing a Crank-Nicolson (CN) method for the time derivative, and the R-L integral term was discretized by the trapezoidal product method. Further, Chen and Xu \cite{Chen1} proposed a formally second-order backward differentiation formula (BDF) scheme for the time derivative, and the R-L integral term was dealt with second-order convolution quadrature rule. After that, Zheng et al. \cite{Zheng} considered a CN-type method for the time derivative, and they used the trapezoidal convolution quadrature rule to approximate the R-L integral term. The theoretical analysis regarding the stability and convergence was reported in these articles. Unfortunately, these studies considered the temporal uniform mesh, which is still affected by the singular behavior of the exact solution at $t=0$, so that it is impossible to achieve accurate second-order convergence for time. Furthermore, in some researches above, numerical algorithms and simulations have been not given to illustrate the effectiveness of their numerical schemes, except for certain linear examples in \cite{Tang} and Crandall's finite difference scheme in \cite{Dehghan}. These facts all urge us to establish a temporal high-order scheme and its algorithm implementation to the numerical solutions of problem \eqref{eq1.1}-\eqref{eq1.3}.

\vskip 0.5mm
The main contributions of this work is presented as follows. (i) Based on the non-uniform meshes, we eliminate the singular behavior of the exact solution at $t=0$, and obtain the exact second-order implicit difference scheme of the non-linear problem \eqref{eq1.1}-\eqref{eq1.3}. (ii) For the constructed implicit difference scheme, with some suitable hypotheses, we prove the existence and uniqueness of the numerical solution, and derive the stability and convergence of the numerical scheme. (iii) In order to illustrate the effectiveness of the implicit difference scheme, we adopt an iterative algorithm to calculate and implement that. The numerical results show that the proposed scheme can reach the accurate second order in the space-time directions, which is consistent with our theoretical results.

\vskip 0.5mm
The rest of this paper is organized as follows. In Section 2, some preliminaries regarding temporal/spatial discretizations are given. Then Section 3 is devoted to the establishment of a fully discrete implicit difference scheme. In Section 4, we prove the existence and uniqueness of numerical solutions and derive the stability and convergence of the proposed scheme. With a fixed point iterative algorithm, numerical experiments are carried out to verify our theoretical results in Section 5. Ultimately, Section 6 gives the concluding remarks.

\section{Preliminaries}

  \subsection{Notations for temporal discretizations}

    \vskip 0.2mm
  Here, we present the temporal levels $0 = t_0 < t_1 < t_2 < \cdots$ and define the symbols
 \begin{equation*}
 \begin{array}{ll}
    k_n := t_n - t_{n-1},  \qquad  t_{n-\frac{1}{2}} := \frac{1}{2} (  t_{n} + t_{n-1} ), \qquad n\geq 1.
 \end{array}
  \end{equation*}

    \vskip 0.2mm
    Furthermore, define the grid function $W_k=\{W^n| 0\leq n\leq N\}$ and notations
 \begin{equation*}
 \begin{array}{ll}
    \delta_t W^{n-\frac{1}{2}} :=\frac{1}{k_n}(W^n-W^{n-1}), \qquad   W^{n-1/2}:=\frac{1}{2}(W^n+W^{n-1}),  \qquad  1\leq n \leq N,
 \end{array}
  \end{equation*}
 and the piecewise constant approximation
      \begin{equation}\label{eq2.1}
      \overline{W}(t):=
     \begin{cases}
          W^1, &   t_0 < t < t_1, \\
          W^{n-1/2}, &   t_{n-1} < t < t_n, \quad n\geq 2.
     \end{cases}
     \end{equation}

    \vskip 0.2mm
    Then, employing $\overline{W}$, we denote the following discrete fractional integral (see \cite{McLean})
     \begin{equation}\label{eq2.2}
     \begin{split}
           I^{(\alpha)}W^{n-1/2} &= \frac{1}{k_n}\int_{t_{n-1}}^{t_n}\mathcal{I}^{(\alpha)}\overline{W}(t)dt = \frac{1}{k_n}\int_{t_{n-1}}^{t_n}\int_{t_0}^{t}\beta(t-\zeta)\overline{W}(\zeta)d\zeta dt \\
           & = \widehat{w}_{n1}W^1k_1 + \sum\limits_{s=2}^{n}\widehat{w}_{ns} W^{s-1/2}k_s,
     \end{split}
     \end{equation}
where we utilize the PI rule, besides,
     \begin{equation}\label{eq2.3}
     \begin{split}
           \widehat{w}_{ns} = \frac{1}{k_nk_s}\int_{t_{n-1}}^{t_n}\int_{t_{s-1}}^{\min\{t,t_s\}}\beta(t-\zeta)  d\zeta dt >0.
     \end{split}
     \end{equation}

    \vskip 0.2mm
Then for $n\geq 2$, it holds that
     \begin{equation}\label{eq2.4}
     \begin{split}
           \widehat{w}_{ns} &= \frac{1}{k_nk_s}\int_{t_{n-1}}^{t_n}\int_{t_{s-1}}^{t_s}\beta(t-\zeta)  d\zeta dt \\
           & = \frac{ \left[ (t_{n}-t_{s-1})^{\alpha+1} - (t_{n}-t_{s})^{\alpha+1} \right] - \left[ (t_{n-1}-t_{s-1})^{\alpha+1} - (t_{n-1}-t_{s})^{\alpha+1} \right] }{k_n k_s \Gamma(\alpha+2)}
     \end{split}
     \end{equation}
and
     \begin{equation}\label{eq2.5}
     \begin{split}
           \widehat{w}_{nn} = \frac{1}{k_n^2}\int_{t_{n-1}}^{t_n}\int_{t_{n-1}}^{t}\beta(t-\zeta)  d\zeta dt = \frac{k_n^{\alpha-1}}{\Gamma(\alpha+2)}, \qquad n\geq 1.
     \end{split}
     \end{equation}

    \vskip 0.2mm
    Additionally, regarding the source term $f$, we give the approximation as follows
     \begin{equation}\label{eq2.6}
     \begin{split}
           f^{n-1/2} \approx \frac{1}{k_n}\int_{t_{n-1}}^{t_n} f(\cdot,t) dt,  \qquad n\geq 1,
     \end{split}
     \end{equation}
assumed to meet that
     \begin{equation}\label{eq2.7}
     \begin{split}
            \left\|  f^{\frac{1}{2}}k_1 - \int_{t_0}^{t_1} f(\cdot,t)dt  \right\| \leq C \int_{t_0}^{t_1} t \|f'(\cdot,t)\| dt,
     \end{split}
     \end{equation}
and for $n\geq 2$,
     \begin{equation}\label{eq2.8}
     \begin{split}
            \left\|  f^{n-1/2}k_n - \int_{t_{n-1}}^{t_n} f(\cdot,t)dt  \right\| \leq C k_n^2 \int_{t_{n-1}}^{t_n}  \|f''(\cdot,t)\| dt.
     \end{split}
     \end{equation}
Here, for example, $f^{n-\frac{1}{2}}=f(\cdot,t_{n-1/2})$ or $f^{n-\frac{1}{2}}= \frac{1}{2}[ f(\cdot,t_{n-1}) + f(\cdot,t_{n}) ]$ is also permissible.

    \vskip 0.5mm
   Next, for eliminating the singular behaviour of the exact solution at $t=0$ and obtaining accurate second order for time, the following hypotheses regarding non-uniform meshes \cite{McLean} is presented by
     \begin{equation}\label{eq2.9}
     \begin{split}
            k_n\leq \widetilde{C}_{\gamma}k \min\left\{1,t_n^{1-\frac{1}{\gamma}}\right\}, \qquad n\geq 1, \qquad \gamma\geq 1,
     \end{split}
     \end{equation}
from which, $\widetilde{C}_{\gamma}$ is independent of $k$,
      \begin{equation}\label{eq2.10}
     \begin{split}
                 t_1=k_1\geq \widetilde{c}_{\gamma}k^{\gamma}, \qquad  t_n \leq \widetilde{C}_{\gamma}t_{n-1},    \qquad n\geq 2,
     \end{split}
     \end{equation}
and a more rigid assumption about the temporal mesh, i.e.,
      \begin{equation}\label{eq2.11}
     \begin{split}
             0\leq k_{n+1} - k_n\leq \widetilde{C}_{\gamma}k^2 \min\left\{1,t_n^{1-2/\gamma}\right\}, \qquad n\geq 2.
     \end{split}
     \end{equation}

   \vskip 0.2mm
   Thus for $t_0\leq t\leq T$, the case satisfying above three hypotheses \eqref{eq2.9}-\eqref{eq2.11} is
      \begin{equation}\label{eq2.12}
     \begin{split}
          t_n = (nk)^{\gamma}, \qquad 0\leq n \leq N, \qquad k= \frac{T^{1/\gamma}}{N}.
     \end{split}
     \end{equation}

   \subsection{Notations of spatial discretizations}
First, denote nodes $x_j=jh\; (0\leq j\leq J)$ with $h=\frac{L}{J}$ for the positive integer $J$. Let $\Omega_h=\{x_j | 0\leq j\leq J \}$. We define the grid function $\mathcal{W}=\{ W_{j}|\,\, 0\leq j \leq J\}$. Then we give the following notations
  \begin{equation*}
  \begin{array}{ll}
     \Delta_{x}W_j=\frac{1}{2h}(W_{j+1}-W_{j-1}), \quad \delta_{x}W_{j-\frac{1}{2}}=\frac{1}{h}(W_j-W_{j-1}), \quad T_+W_j=W_{j+1}, \quad T_-W_j=W_{j-1}, \\
     \Delta W_j=T_+W_j - T_-W_j,  \qquad   \Delta_{+}W_j=W_{j+1} - W_{j}, \qquad \Delta_{-} W_j = W_{j} - W_{j-1}, \\
     \delta_{x}^{2}W_j=\frac{1}{h}(\delta_{x}W_{j+\frac{1}{2}}-\delta_{x}W_{j-\frac{1}{2}}), \qquad \nabla W_j=\frac{1}{3}(W_{j-1}+W_{j}+u_{j+1}).
 \end{array}
 \end{equation*}

    \vskip 0.2mm
Then we present the following lemma.
  \begin{lemma}\label{lemma2.1}
  \cite{Zhang}. Assuming $G(x)\in C_{x}^{4}([0,L])$, we can get
  $$\left.
  \begin{array}{ll}
   G''(x_{j})=\delta_{x}^{2}G_{j}-\frac{h^{2}}{6}\int_{0}^{1} \Big( G''''(x_{j}+\zeta h)+G''''(x_{j}-\zeta h) \Big)(1-\zeta)^{3}d\zeta.
  \end{array}
 \right.
 $$
 \end{lemma}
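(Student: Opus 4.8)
The plan is to start from the exact second-order centered difference operator $\delta_x^2 G_j = \frac{1}{h^2}\bigl(G(x_j+h) - 2G(x_j) + G(x_j-h)\bigr)$ and expand each of the shifted values $G(x_j \pm h)$ about $x_j$ using Taylor's theorem with the integral form of the remainder. Since $G \in C^4([0,L])$, I would write, for the forward node,
\begin{equation*}
G(x_j+h) = G(x_j) + hG'(x_j) + \frac{h^2}{2}G''(x_j) + \frac{h^3}{6}G'''(x_j) + \frac{1}{6}\int_0^h (h-s)^3 G''''(x_j+s)\,ds,
\end{equation*}
and symmetrically for $G(x_j-h)$ with $h$ replaced by $-h$ inside the argument (equivalently, integrating $G''''(x_j-s)$). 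After substituting the change of variables $s = \zeta h$ so that $(h-s)^3\,ds = h^4(1-\zeta)^3\,d\zeta$, the remainder integrals acquire the factor $h^4$.

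Next I would add the two expansions. The first-order terms $\pm hG'(x_j)$ cancel, and the third-order terms $\pm\frac{h^3}{6}G'''(x_j)$ cancel as well, leaving exactly $2G(x_j) + h^2 G''(x_j)$ from the polynomial part plus the sum of the two integral remainders, each carrying $h^4/6$. Subtracting $2G(x_j)$ and dividing through by $h^2$ then yields
\begin{equation*}
\delta_x^2 G_j = G''(x_j) + \frac{h^2}{6}\int_0^1 \Bigl(G''''(x_j+\zeta h) + G''''(x_j-\zeta h)\Bigr)(1-\zeta)^3\,d\zeta,
\end{equation*}
which rearranges immediately into the claimed identity by moving the remainder term to the other side.

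There is essentially no serious obstacle here; the one point that requires a little care is bookkeeping the signs of the odd-order terms when expanding $G(x_j-h)$ and checking that the remainder for the backward node genuinely combines into $G''''(x_j-\zeta h)(1-\zeta)^3$ after the substitution (the sign $(-h-(-s))^3 = -(h-s)^3$ is compensated by the $-1$ from integrating in the negative direction, so the remainder stays positive-oriented in $\zeta\in[0,1]$). I would also remark that $C^4$ regularity is exactly what makes the integral remainder well defined, so no extra smoothness is invoked. This is the statement cited from \cite{Zhang}, so the argument above merely reconstructs the standard derivation.
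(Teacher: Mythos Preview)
Your argument is correct: the Taylor expansion of $G(x_j\pm h)$ to third order with the integral form of the remainder, followed by the substitution $s=\zeta h$, produces exactly the stated identity, and your handling of the sign in the backward remainder is accurate. The paper itself does not prove this lemma but merely cites \cite{Zhang}, so there is no in-text proof to compare against; your derivation is the standard one that underlies the cited result.
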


 \vskip 0.2mm
   \begin{remark} \label{remark1}
    Throughout this paper, $C$ denotes a general positive numeber, which may be various in different situations, however, independent of the space-time step sizes $k_n$ with $1\leq n \leq N$ and $h$.
 \end{remark}

\section{Construction of second-order implicit difference scheme}
 \vskip 0.2mm
 Below, we can establish a fully discrete second-order implicit difference scheme for the problem \eqref{eq1.1}-\eqref{eq1.3}.

 \vskip 0.2mm
 Firstly, denote the following grid functions
    \begin{equation*}
    \begin{array}{ll}
         u^n = u(x,t_n), \qquad  0\leq n\leq N,  \qquad   f^{n} =f(x,t_{n}),  \qquad  1\leq n\leq N,
    \end{array}
    \end{equation*}
    \begin{equation*}
    \begin{array}{ll}
         u_j^n = u(x_j,t_n), \quad  0\leq n\leq N,  \quad   f_j^{n} = f(x_j,t_{n}),  \quad  1\leq n\leq N,  \quad 0\leq j \leq J,
    \end{array}
    \end{equation*}
and assume that
    \begin{equation*}
    \begin{array}{ll}
     c_0 := \max \limits_{   (x,t)\in [0,L]\times(0,T] } \left\{ |u(x,t)|, \left|\frac{\partial}{\partial x}u(x,t)\right| \right\}.
    \end{array}
    \end{equation*}

  \vskip 0.2mm
 Then we integrate \eqref{eq1.1} from $t=t_{n-1}$ to $t=t_{n}$ and multiply $\frac{1}{k_n}$ to obtain
 \begin{equation}\label{eq3.1}
 \begin{split}
   \delta_tu^{n-\frac{1}{2}} & + \frac{1}{k_n} \int_{t_{n-1}}^{t_n} uu_x dt = \frac{1}{k_n} \int_{t_{n-1}}^{t_n}\int_{t_0}^{t}\beta(t-\zeta)u_{xx}(x,\zeta)d\zeta dt \\
   & + \frac{1}{k_n} \int_{t_{n-1}}^{t_n}f(x,t)dt, \qquad x\in (0,L), \qquad 1\leq n\leq N,
 \end{split}
  \end{equation}
 from which, we have
 \begin{equation}\label{eq3.2}
 \begin{split}
   \delta_tu^{n-\frac{1}{2}} & + \frac{1}{k_n} \int_{t_{n-1}}^{t_n} uu_x dt = \frac{1}{k_n} \int_{t_{n-1}}^{t_n}\int_{t_0}^{t}\beta(t-\zeta) \overline{u}_{xx}(x,\zeta)d\zeta dt \\
   & + f^{n-1/2} + (R_1)^{n-1/2},  \qquad x\in (0,L), \qquad 1\leq n\leq N,
 \end{split}
  \end{equation}
 where $(R_1)^{n-\frac{1}{2}} =  (R_{11})^{n-\frac{1}{2}} +  (R_{12})^{n-\frac{1}{2}}$ with
 \begin{equation}\label{eq3.3}
 \begin{split}
   (R_{11})^{n-1/2} =  \frac{1}{k_n} \int_{t_{n-1}}^{t_n}f(x,t)dt- f^{n-1/2},
 \end{split}
  \end{equation}
 and
 \begin{equation}\label{eq3.4}
 \begin{split}
   (R_{12})^{n-1/2} =\frac{1}{k_n} \int_{t_{n-1}}^{t_n}\int_{t_0}^{t}\beta(t-s)\Big[u_{xx}(x,s)-\overline{u}_{xx}(x,s)\Big]ds dt,
 \end{split}
  \end{equation}
  where
   \begin{equation}\label{eq3.5}
 \begin{split}
    f^{n-1/2} = \frac{1}{2} \Big(  f(x,t_{n}) + f(x,t_{n-1})  \Big).
 \end{split}
  \end{equation}

    \vskip 0.2mm
    Then for second term in \eqref{eq3.2}, we employ the right rectangle formula and the middle rectangle formula to get respectively
 \begin{equation}\label{eq3.6}
 \begin{split}
         \frac{1}{k_1} \int_{t_{0}}^{t_1} uu_x dt = u(x,t_1)u_x(x,t_1) + \mathcal{O}(k_1),
 \end{split}
  \end{equation}
 \begin{equation}\label{eq3.7}
 \begin{split}
         \frac{1}{k_n} \int_{t_{n-1}}^{t_n} uu_x dt = u(x,t_{n-1/2})u_x(x,t_{n-1/2}) + \mathcal{O}(k_n^2),  \qquad  2\leq n \leq N.
 \end{split}
  \end{equation}

Next, we employ \eqref{eq3.2}, \eqref{eq3.6} and \eqref{eq3.7} to obtain that
 \begin{equation}\label{eq3.8}
 \begin{split}
   \delta_tu^{\frac{1}{2}} & + u(x,t_1)u_x(x,t_1) = I^{(\alpha)} u_{xx}^{\frac{1}{2}}
    + f^{\frac{1}{2}} + (R_1)^{\frac{1}{2}} + \mathcal{O}(k_1),  \quad x\in (0,L),
 \end{split}
  \end{equation}
and
 \begin{equation}\label{eq3.9}
 \begin{split}
   \delta_tu^{n-\frac{1}{2}} & + u(x,t_{n-\frac{1}{2}})u_x(x,t_{n-\frac{1}{2}})  = I^{(\alpha)}u_{xx}^{n-1/2}   + f^{n-1/2} \\
   & + (R_1)^{n-1/2}  + \mathcal{O}(k_n^2), \qquad x\in (0,L),  \qquad 2\leq n\leq N.
 \end{split}
  \end{equation}

%%%%%%%%%%%%%%%%%%%%%%%%%%%%%%%%%%%%%%%%%%%%%%%%%%%%%%%%%%%%%%%%%%%%%%%%%%%%%%%%%%%%%%%%%%%%%%%%%%%%%%%%%%%%%%

    \vskip 2mm
    Then, we construct the fully discrete implicit difference scheme based on the spacial difference approximation.

    \vskip 0.2mm
   For \eqref{eq3.8} and \eqref{eq3.9}, we consider them at the point $x_j$, and use Lemma \ref{lemma2.1} to get
 \begin{equation}\label{eq3.10}
 \begin{split}
   \delta_tu_j^{\frac{1}{2}}  + u(x_j,t_1)u_x(x_j,t_1) &= I^{(\alpha)} \delta_x^2u_j^{\frac{1}{2}}
    + f_j^{\frac{1}{2}} + (R_1)_j^{\frac{1}{2}}+  \mathcal{O}\left( \left( \frac{t_1^{\alpha}-t_0^{\alpha}}{k_1\Gamma(\alpha+1)}\right)   h^2  \right) \\
    & + \mathcal{O}(k_1+h^2),  \qquad 1\leq j \leq J-1,
 \end{split}
  \end{equation}
and
 \begin{equation}\label{eq3.11}
 \begin{split}
   \delta_tu_j^{n-\frac{1}{2}} & + u(x_j,t_{n-\frac{1}{2}})u_x(x_j,t_{n-\frac{1}{2}})  = I^{(\alpha)}\delta_x^2u_j^{n-1/2}   + f_j^{n-1/2} + (R_1)_j^{n-1/2} \\
   & + \mathcal{O}\left( \left( \frac{t_n^{\alpha}-t_{n-1}^{\alpha}}{k_n\Gamma(\alpha+1)}\right)   h^2  \right)  + \mathcal{O}(k_n^2 + h^2),   \qquad 1\leq j \leq J-1,   \qquad 2\leq n\leq N,
 \end{split}
  \end{equation}
from which, we utilize the Galerkin method based on piecewise linear test functions \cite{Lopez-Marcos} to approximate the nonlinear convection term
 \begin{equation}\label{eq3.12}
 \begin{split}
     u(x_j,t_1)u_x(x_j,t_1) =  \nabla u^{1}_j \Delta_x  u^{1}_j + \mathcal{O}(h^2),
 \end{split}
  \end{equation}
 \begin{equation}\label{eq3.13}
 \begin{split}
     u(x_j,t_{n-\frac{1}{2}})u_x(x_j,t_{n-\frac{1}{2}}) =  \nabla u^{n-1/2}_j \Delta_x  u^{n-1/2}_j + \mathcal{O}(h^2),  \qquad 2\leq n\leq N.
 \end{split}
  \end{equation}

\vskip 2mm
Thus, we can get the following fully discrete implicit difference equations
 \begin{equation}\label{eq3.14}
 \begin{split}
   \delta_tu_j^{\frac{1}{2}} & +  \nabla u^{1}_j \Delta_x  u^{1}_j = I^{(\alpha)} \delta_x^2u_j^{\frac{1}{2}}
    + f_j^{\frac{1}{2}} + (R_1)_j^{\frac{1}{2}} + (R_2)_j^{\frac{1}{2}},  \quad 1\leq j \leq J-1,
 \end{split}
  \end{equation}
and
 \begin{equation}\label{eq3.15}
 \begin{split}
   \delta_tu_j^{n-\frac{1}{2}} & + \nabla u^{n-1/2}_j \Delta_x  u^{n-1/2}_j  = I^{(\alpha)}\delta_x^2u_j^{n-1/2}   + f_j^{n-1/2} \\
   & + (R_1)_j^{n-1/2}  + (R_2)_j^{n-1/2} ,   \qquad 1\leq j \leq J-1,   \qquad 2\leq n\leq N,
 \end{split}
  \end{equation}
where
 \begin{equation}\label{eq3.16}
 \begin{split}
      &\left|(R_2)_j^{\frac{1}{2}} \right| \leq C\left(k_1+h^2+ \left( \frac{t_1^{\alpha}-t_{0}^{\alpha}}{k_1\Gamma(\alpha+1)}\right)   h^2\right),  \\
        & \left| (R_2)_j^{n-\frac{1}{2}} \right| \leq  C\left(k_n^2+h^2+ \left( \frac{t_n^{\alpha}-t_{n-1}^{\alpha}}{k_n\Gamma(\alpha+1)}\right)   h^2\right), \qquad 2\leq n\leq N,
 \end{split}
  \end{equation}
which subject to the following initial and boundary conditions
 \begin{equation}\label{eq3.17}
 \begin{split}
     u^0_{j} = u_0(x_j),   \qquad 1\leq j \leq J-1,
 \end{split}
  \end{equation}
 \begin{equation}\label{eq3.18}
 \begin{split}
    u_0^n=u_J^n=0, \qquad 1\leq n\leq N.
 \end{split}
  \end{equation}

  \vskip 0.2mm
  Now, omitting $(R_1)_j^{n-\frac{1}{2}}$ and $(R_2)_j^{n-\frac{1}{2}}$ in \eqref{eq3.14}-\eqref{eq3.15} and replacing $u^n_{j}$ with its numerical approximation $U^n_{j}$, the fully discrete implicit difference scheme can be yielded by
 \begin{equation}\label{eq3.19}
 \begin{split}
   \delta_tU_j^{\frac{1}{2}} & +  \nabla U^{1}_j \Delta_x  U^{1}_j = I^{(\alpha)} \delta_x^2U_j^{1/2}
    + f_j^{1/2},  \qquad 1\leq j \leq J-1,
 \end{split}
  \end{equation}
 \begin{equation}\label{eq3.20}
 \begin{split}
   \delta_tU_j^{n-\frac{1}{2}} & + \nabla U^{n-\frac{1}{2}}_j \Delta_x  U^{n-\frac{1}{2}}_j  = I^{(\alpha)}\delta_x^2U_j^{n-\frac{1}{2}}   + f_j^{n-\frac{1}{2}}, \quad 1\leq j \leq J-1,   \quad 2\leq n\leq N,
 \end{split}
  \end{equation}
 \begin{equation}\label{eq3.21}
 \begin{split}
     U^0_{j} = u_0(x_j),   \qquad 1\leq j \leq J-1,
 \end{split}
  \end{equation}
 \begin{equation}\label{eq3.22}
 \begin{split}
    U_0^n=U_J^n=0, \qquad 1\leq n\leq N.
 \end{split}
  \end{equation}

\section{Theoretical analysis}
  \vskip 0.2mm
      In this section, we give some notations and lemmas for our theoretical results, including the existence and uniqueness of numerical solutions, and the stability and convergence of implicit difference scheme.

 \vskip 0.2mm
     Let $\mathbf{W}_h=\{w|w=(w_0, w_1, \cdots, w_J), w_0=w_J=0\}$. For any $w,v\in\mathbf{W}_h$, the discrete inner product and norms can be denoted via
     \begin{equation}\label{eq4.1}
     \begin{array}{ll}
     \langle w, v \rangle:=h\sum\limits_{s=1}^{J-1}w_sv_s,  \qquad
     \|w\|:=\sqrt{\langle w,w \rangle},   \qquad  \|w\|_{\infty}:=\max \limits_{0\leq s \leq J}\{ |w_s| \}.
     \end{array}
     \end{equation}

   \vskip 0.2mm
     \begin{lemma}\label{lemma4.1}\cite{Lopez-Marcos} For any $w, v\in \mathbf{W}_h$, it holds that
     	\begin{equation*}
     	\langle \delta_x^2w, v \rangle=-h\sum\limits_{s=0}^{J-1}{\delta_xw_{s+1}}{\delta_xv_{s+1}}.
     	\end{equation*}
     \end{lemma}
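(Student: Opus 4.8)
This identity is the discrete analogue of the integration-by-parts formula $\int_0^L w''v\,dx=-\int_0^L w'v'\,dx$, valid when $v$ vanishes at the endpoints, so the plan is to prove it by a discrete Abel summation together with the homogeneous boundary constraint $v_0=v_J=0$ built into the space $\mathbf{W}_h$. First I would unfold the left-hand side: by the definition \eqref{eq4.1} of $\langle\cdot,\cdot\rangle$ and the definition $\delta_x^2 w_s=\tfrac1h(\delta_x w_{s+\frac12}-\delta_x w_{s-\frac12})$, one gets $\langle\delta_x^2 w,v\rangle=\sum_{s=1}^{J-1}(\delta_x w_{s+\frac12}-\delta_x w_{s-\frac12})v_s$, the factor $h$ from the inner product cancelling the $\tfrac1h$ in $\delta_x^2$.

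Next I would split this into two sums and shift the summation index by one in the second, so that both carry the common factor $\delta_x w_{s+\frac12}$; after collecting the terms over the overlapping range $1\le s\le J-2$ this produces
\begin{equation*}
\langle\delta_x^2 w,v\rangle=-\sum_{s=1}^{J-2}\delta_x w_{s+\frac12}\,(v_{s+1}-v_s)+\delta_x w_{J-\frac12}\,v_{J-1}-\delta_x w_{\frac12}\,v_1 .
\end{equation*}
Then, using $v_{s+1}-v_s=h\,\delta_x v_{s+\frac12}$ and the boundary conditions $v_0=v_J=0$ to rewrite $v_{J-1}=-h\,\delta_x v_{J-\frac12}$ and $v_1=h\,\delta_x v_{\frac12}$, the two leftover endpoint terms become exactly the $s=J-1$ and $s=0$ summands of $-h\sum_{s=0}^{J-1}\delta_x w_{s+\frac12}\,\delta_x v_{s+\frac12}$, and absorbing them into the interior sum yields the claimed identity (after relabelling the index to match the notation of the statement).

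The computation is entirely algebraic, so there is no genuine analytic obstacle; the one point that requires care is the index bookkeeping — making sure the shift in the second sum leaves precisely the two boundary terms at $s=0$ and $s=J-1$, and that $v_0=v_J=0$ converts these into the missing end summands rather than into a spurious residue. The same identity is recorded in \cite{Lopez-Marcos}, so one may alternatively just cite it, but the short derivation above keeps the discrete energy estimates in the sequel self-contained.
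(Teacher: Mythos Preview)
Your argument is correct: it is the standard discrete summation-by-parts computation, and your index bookkeeping and use of $v_0=v_J=0$ are accurate. The paper itself does not supply a proof of this lemma at all --- it is simply quoted from \cite{Lopez-Marcos} --- so there is nothing to compare against; your derivation just makes the cited identity self-contained.
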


    \vskip 0.2mm
  \begin{lemma}\label{lemma4.2}\cite{Lopez-Marcos} For any $w,v \in \mathbf{W}_h$, we have
  \begin{equation*}
  \begin{array}{ll}
  \langle\Delta(wv), v\rangle=\frac{1}{2}\langle T_+v\Delta_+w+T_-v\Delta_-w, v\rangle.
  \end{array}
  \end{equation*}
 \end{lemma}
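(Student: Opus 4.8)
The plan is to reduce this identity to an elementary node-wise algebraic identity followed by a single discrete summation by parts (a shift of the summation index) that uses the homogeneous boundary values built into $\mathbf{W}_h$.

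\textit{First}, I would verify the pointwise identity, for every interior index $1\le j\le J-1$,
\begin{equation*}
\Delta(wv)_j = (T_+v\,\Delta_+w)_j + (T_-v\,\Delta_-w)_j + w_j(\Delta v)_j ,
\end{equation*}
which is immediate on expanding the right-hand side as $v_{j+1}(w_{j+1}-w_j)+v_{j-1}(w_j-w_{j-1})+w_j(v_{j+1}-v_{j-1})$: the cross terms $\pm w_jv_{j+1}$ and $\pm w_jv_{j-1}$ cancel, leaving $w_{j+1}v_{j+1}-w_{j-1}v_{j-1}=\Delta(wv)_j$. Pairing with $v$ in the discrete inner product then gives
\begin{equation*}
\langle \Delta(wv), v\rangle = \langle T_+v\,\Delta_+w + T_-v\,\Delta_-w,\, v\rangle + \langle w\,\Delta v,\, v\rangle .
\end{equation*}

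\textit{Second}, I would show $\langle w\,\Delta v, v\rangle = -\tfrac12\langle T_+v\,\Delta_+w + T_-v\,\Delta_-w,\, v\rangle$. Expanding, $\langle w\,\Delta v, v\rangle = h\sum_{j=1}^{J-1} w_jv_jv_{j+1} - h\sum_{j=1}^{J-1} w_jv_jv_{j-1}$; shifting $j\mapsto j+1$ in the second sum and discarding the boundary contributions (which vanish since $v_0=v_J=0$) rewrites it as $h\sum_{j=1}^{J-1} w_{j+1}v_{j+1}v_j$, whence
\begin{equation*}
\langle w\,\Delta v, v\rangle = h\sum_{j=1}^{J-1} v_jv_{j+1}(w_j-w_{j+1}) = -\langle T_+v\,\Delta_+w,\, v\rangle .
\end{equation*}
The identical index shift applied to $\langle T_-v\,\Delta_-w, v\rangle$ identifies it with $\langle T_+v\,\Delta_+w, v\rangle$, so $\langle w\,\Delta v, v\rangle = -\tfrac12\big(\langle T_+v\,\Delta_+w, v\rangle + \langle T_-v\,\Delta_-w, v\rangle\big)$. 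Substituting this into the identity of the first step collapses its right-hand side to $\tfrac12\langle T_+v\,\Delta_+w + T_-v\,\Delta_-w,\, v\rangle$, which is the asserted equality.

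There is no real analytic difficulty here; the whole argument is bookkeeping. The one place that needs care — and the only place the hypothesis $w,v\in\mathbf{W}_h$ is actually used — is tracking the boundary terms generated by the index shift $j\mapsto j+1$: they are precisely $w_1v_1v_0$ and $w_Jv_Jv_{J-1}$, and they drop out because $v_0=v_J=0$. I would double-check that these (together with the analogous terms arising for $\langle T_-v\,\Delta_-w, v\rangle$) are the only stray terms, after which the result follows.
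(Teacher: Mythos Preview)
Your argument is correct. The paper does not actually prove Lemma~4.2; it simply quotes the identity from \cite{Lopez-Marcos}, so there is no ``paper's own proof'' to compare against. Your index-shift computation is exactly the standard summation-by-parts verification, and the boundary terms you identify, $w_1v_1v_0$ and $w_Jv_Jv_{J-1}$ (together with their analogues for the $T_-$ sum), are the right ones and vanish by $v_0=v_J=0$.
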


  \vskip 0.2mm
  \begin{lemma}\label{lemma4.3} For any $w,v \in \mathbf{W}_h$, we can yield
  \begin{equation*}
  \begin{array}{ll}
   (i)\, \langle w\Delta v+ \Delta(wv), v\rangle=0; \qquad
   (ii)\, \langle w\Delta w ,  v\rangle + \langle \Delta (wv) ,  w\rangle = 0; \\
   (iii)\, \langle w\Delta v ,  w\rangle + \langle \Delta(w)^2 ,  v\rangle = 0.
  \end{array}
  \end{equation*}
 \end{lemma}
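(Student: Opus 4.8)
The plan is to deduce all three identities from a single discrete summation-by-parts property of the central difference operator $\Delta$ together with the homogeneous boundary values encoded in $\mathbf{W}_h$, so that items (i)--(iii) each become a one-line consequence rather than three separate computations.

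First I would establish the auxiliary skew-adjointness relation
$$\langle \Delta g, v\rangle = -\langle g, \Delta v\rangle \qquad \text{whenever } g_0=g_J=0 \text{ and } v_0=v_J=0 .$$
This follows by expanding $\langle \Delta g,v\rangle + \langle g,\Delta v\rangle = h\sum_{j=1}^{J-1}\bigl[(g_{j+1}v_j+g_jv_{j+1})-(g_{j-1}v_j+g_jv_{j-1})\bigr]$ and observing that the summand equals $p_{j+1}-p_j$ with $p_j:=g_{j-1}v_j+g_jv_{j-1}$; the telescoping sum collapses to $p_J-p_1$, and both of these vanish since each contains a factor evaluated at an endpoint. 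In the three situations of the lemma the first argument is $g=wv$ or $g=w^2$ with $w\in\mathbf{W}_h$, so indeed $g_0=g_J=0$ and the hypothesis is met.

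With this in hand the three items are immediate. For (i), the pointwise identity $(w\Delta v)_jv_j=(wv)_j(\Delta v)_j$ gives $\langle w\Delta v,v\rangle=\langle wv,\Delta v\rangle$, and the relation applied with $g=wv$ yields $\langle \Delta(wv),v\rangle=-\langle wv,\Delta v\rangle=-\langle w\Delta v,v\rangle$, which is exactly (i). Identity (ii) then follows from (i) by interchanging $w$ and $v$ (legitimate since (i) holds for arbitrary $w,v\in\mathbf{W}_h$ and $wv=vw$ componentwise, after rewriting $\langle v\Delta w,w\rangle=\langle w\Delta w,v\rangle$). For (iii), the pointwise identity $(w\Delta v)_jw_j=(w^2)_j(\Delta v)_j$ gives $\langle w\Delta v,w\rangle=\langle w^2,\Delta v\rangle$, and the relation with $g=w^2$ gives $\langle \Delta(w^2),v\rangle=-\langle w^2,\Delta v\rangle=-\langle w\Delta v,w\rangle$, which is (iii).

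There is no genuine obstacle here; the proof is essentially bookkeeping. The only points demanding care are keeping the index shifts in the telescoping sum straight so that all interior terms cancel in pairs, and checking that the boundary remainders $p_1$ and $p_J$ really do vanish — i.e. that the products $wv$ and $w^2$ inherit the zero endpoint values from $w\in\mathbf{W}_h$ — which is precisely what lets the discrete integration by parts lose no boundary contribution. One could alternatively extract (i)--(iii) directly from Lemma~\ref{lemma4.2}, but routing everything through summation by parts treats the three identities uniformly.
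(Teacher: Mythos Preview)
Your argument is correct. The skew-adjointness relation $\langle \Delta g,v\rangle=-\langle g,\Delta v\rangle$ is established cleanly by the telescoping you describe, and items (i)--(iii) follow exactly as you say once you note that $wv$ and $w^2$ inherit zero endpoint values from $w\in\mathbf{W}_h$ and that the pointwise commutativity $(w\Delta v)_jv_j=(wv)_j(\Delta v)_j$ etc.\ lets you move scalar factors across the inner product.

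The paper takes a more hands-on route: it expands $\langle w\Delta v+\Delta(wv),v\rangle$ directly as four explicit sums, shifts indices in two of them, and reads off the cancellation and the vanishing boundary remainders; (ii) and (iii) are then declared analogous. Your approach isolates the mechanism once---the skew-adjointness of $\Delta$ on grid functions with zero endpoints---and reduces each of (i)--(iii) to a one-line application. This buys uniformity and makes clear that the three identities share a common source, at the mild cost of an auxiliary statement. The paper's version is entirely self-contained for (i) but leaves the reader to redo the index manipulation twice more; yours avoids that repetition. Both are equally elementary.
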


  \vskip 0.2mm
 \begin{proof} $(i)$ From the definition of $\langle \cdot , \cdot\rangle$, we first yield that
  \begin{equation*}
  \begin{array}{ll}
  \langle w\Delta v+ \Delta(wv), v\rangle \\
  = h \sum\limits_{s=1}^{J-1}w_s(v_{s+1}-v_{s-1})v_s + h \sum\limits_{s=1}^{J-1}(w_{s+1}v_{s+1}-w_{s-1}v_{s-1})v_s \\
  = h \sum\limits_{s=1}^{J-1}w_sv_{s+1}v_s - h \sum\limits_{s=1}^{J-1}w_{s}v_{s-1}v_{s}
  + h \sum\limits_{s=1}^{J-1}w_{s+1}v_{s}v_{s+1} - h \sum\limits_{s=1}^{J-1}w_{s-1}v_{s-1}v_s  \\
  = h \sum\limits_{s=2}^{J}w_{s-1}v_{s-1}v_{s} - h \sum\limits_{s=1}^{J-1}w_{s-1}v_{s-1}v_{s}
  + h \sum\limits_{s=2}^{J}w_{s}v_{s-1}v_s - h \sum\limits_{s=1}^{J-1}w_{s}v_{s-1}v_s  \\
  = hw_{J-1}v_{J}v_{J-1} - hw_{0}v_{1}v_0 + hw_{J}v_{J-1}v_J - hw_{1}v_{0}v_1 \\
  = 0.
  \end{array}
  \end{equation*}
The proofs of $(ii)$ and $(iii)$ can be obtained analogously. Then we complete the proof.
 \end{proof}

 \vskip 0.2mm
  \begin{lemma}\label{lemma4.4} For any $w, v \in \mathbf{W}_h$, it holds that
  \begin{equation*}
  \begin{array}{ll}
  \big\langle v\Delta(v-w)+(v-w)\Delta w + \Delta(v-w)(v+w)  , v-w \big\rangle  \\
  = \big\langle (v-w)\Delta w + \Delta(w(v-w)) , v-w \big\rangle.
  \end{array}
  \end{equation*}
 \end{lemma}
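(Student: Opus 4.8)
The plan is to reduce the claimed identity to Lemma \ref{lemma4.3}(i) by a simple change of variable, so that no index computation is needed at all. Set $e := v-w$, which again lies in $\mathbf{W}_h$ since the space is linear. Then the left-hand side reads $\langle v\Delta e + e\Delta w + \Delta(e(v+w)),\, e\rangle$ and the right-hand side reads $\langle e\Delta w + \Delta(we),\, e\rangle$. Subtracting the two, the common term $\langle e\Delta w,\, e\rangle$ cancels, and it remains only to show
\begin{equation*}
\big\langle v\Delta e + \Delta(e(v+w)) - \Delta(we),\, e\big\rangle = 0.
\end{equation*}

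Next I would invoke the linearity of the operator $\Delta$ together with the fact that the product of grid functions is defined pointwise, hence is commutative and distributive: at every node one has $e(v+w)-we = ev + ew - we = ev$, so that $\Delta(e(v+w)) - \Delta(we) = \Delta(ev) = \Delta(ve)$. Consequently the expression that must be shown to vanish is exactly $\langle v\Delta e + \Delta(ve),\, e\rangle$.

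Finally, this is precisely the statement of Lemma \ref{lemma4.3}(i) with its ``$w$'' replaced by $v$ and its ``$v$'' replaced by $e$, which yields $\langle v\Delta e + \Delta(ve),\, e\rangle = 0$, completing the argument. I do not expect any genuine obstacle here: the only point requiring care is the bookkeeping inside the arguments of the $\Delta(\cdot)$ terms — one should notice that after cancelling $\langle e\Delta w, e\rangle$ the remaining combination collapses, via pointwise distributivity, to the exact template of Lemma \ref{lemma4.3}(i), rather than re-expanding everything into telescoping index sums as in the proof of that lemma. (Should one prefer, the identity can of course also be checked directly by the same telescoping manipulation used for Lemma \ref{lemma4.3}, but the reduction above is shorter and reuses work already done.)
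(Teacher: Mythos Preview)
Your argument is correct. Both you and the paper reduce the claim to Lemma~\ref{lemma4.3}(i), but the routes differ. The paper first subtracts the zero quantity $\langle (v-w)\Delta(v-w)+\Delta(v-w)^2,\,v-w\rangle$ (an instance of Lemma~\ref{lemma4.3}(i) with both slots equal to $v-w$), simplifies the left-hand side to $\langle (v-w)\Delta w + w\Delta(v-w) + 2\Delta(w(v-w)),\,v-w\rangle$, splits this as $\mathcal{A}_1+\mathcal{A}_2$ with $\mathcal{A}_1$ the desired right-hand side, and then argues $\mathcal{A}_2=0$ via a further expansion invoking all three parts of Lemma~\ref{lemma4.3}. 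Your approach is more economical: you form $\text{LHS}-\text{RHS}$ directly, let the common $\langle e\Delta w,\,e\rangle$ cancel, collapse $\Delta(e(v+w))-\Delta(we)$ to $\Delta(ve)$ by pointwise algebra, and recognise the remainder as a single instance of Lemma~\ref{lemma4.3}(i) with the pair $(v,e)$. This avoids the add-and-subtract trick and the longer computation for $\mathcal{A}_2$ (which, incidentally, is itself just Lemma~\ref{lemma4.3}(i) applied to the pair $(w,v-w)$, so the paper's expansion there is more elaborate than strictly necessary). The net effect is the same identity, obtained with less bookkeeping.
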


  \vskip 0.2mm
 \begin{proof} First, using Lemma \ref{lemma4.3}, we get
   \begin{equation*}
  \begin{array}{ll}
  \big\langle v\Delta(v-w)+(v-w)\Delta w + \Delta(v-w)(v+w)  , v-w \big\rangle  \\
  = \big\langle v\Delta(v-w)+(v-w)\Delta w + \Delta(v-w)(v+w)  , v-w \big\rangle \\
  - \langle (v-w)\Delta (v-w)+\Delta(v-w)^2, v-w\rangle \\
  = \langle (v-w)\Delta w + w\Delta(v-w) + 2\Delta(w(v-w))  , v-w \rangle \\
  = \big\langle (v-w)\Delta w + \Delta(w(v-w))  , v-w \big\rangle
  + \big\langle w\Delta(v-w) + \Delta(w(v-w))  , v-w \big\rangle  \\
  = \mathcal{A}_1 + \mathcal{A}_2.
  \end{array}
  \end{equation*}
Then we only need to derive $\mathcal{A}_2=0$. Employing Lemma \ref{lemma4.3} again, we yield
  \begin{equation*}
  \begin{array}{ll}
     \mathcal{A}_2 = \langle w\Delta(v-w) + \Delta(w(v-w))  , v-w \rangle \\
      = \langle w\Delta(v-w) + \Delta(w(v-w))  , v \rangle
      - \langle w\Delta(v-w) + \Delta(w(v-w))  , w \rangle \\
      = \langle  w\Delta v - w\Delta w + \Delta(wv) - \Delta(w)^2  , v \rangle
      - \langle  w\Delta v - w\Delta w + \Delta(wv) - \Delta(w)^2  , w \rangle  \\
      = \langle  w\Delta v - w\Delta w + \Delta(wv) - \Delta(w)^2  , v \rangle
      - \langle  w\Delta v  + \Delta(wv)  , w \rangle  \\
      = \langle w\Delta v+ \Delta(wv), v\rangle  -  (\langle w\Delta w ,  v\rangle + \langle \Delta (wv) ,  w\rangle) - ( \langle w\Delta v ,  w\rangle + \langle \Delta(w)^2 ,  v\rangle )  \\
      = 0,
  \end{array}
  \end{equation*}
which finishes the proof.
 \end{proof}

%%%%%%%%%%%%%%%%%%%%%%%%%%%%%%%%%%%%%%%%%%%%%%%%%%%%%%%%%%%%%%%%%%%%%%%%%%%%%%%%%%%%%%%%%%%%%%%%%%%%%%%

  \subsection{Existence of numerical solutions}

  \vskip 0.2mm
  Herein, we use the Leray-Schauder theorem \cite{Ortega} to derive the existence of numerical solutions for the second-order implicit difference scheme \eqref{eq3.19}-\eqref{eq3.22}.

  \vskip 0.2mm
  \begin{theorem}\label{th4.1}
	Given $J, N\in \mathbb{Z}^+$ and $U^{0}\in\mathbb{R}^{J-1}$, then second-order implicit difference equations \eqref{eq3.19}-\eqref{eq3.20} have the solution $U^{n}$ with $1 \leq n \leq N$.
  \end{theorem}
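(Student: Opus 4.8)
The plan is to apply the Leray–Schauder fixed point theorem inductively in $n$. Suppose $U^{0}, U^{1}, \dots, U^{n-1}$ have already been constructed (the base case $n=1$ uses only the given $U^{0}$). For the step from $n-1$ to $n$, I rewrite the difference equation \eqref{eq3.19} (for $n=1$) or \eqref{eq3.20} (for $n\geq 2$) as a fixed point equation $U^{n} = \mathcal{G}(U^{n})$ on $\mathbf{W}_h \cong \mathbb{R}^{J-1}$. Concretely, observe that the diagonal term $\widehat{w}_{nn}k_n = k_n^{\alpha}/\Gamma(\alpha+2)$ appears in $I^{(\alpha)}\delta_x^2 U_j^{n-1/2}$ multiplying $\tfrac12 \delta_x^2 U_j^n$, so after collecting all terms involving $U^n$ on the left and moving the history terms (which depend only on $U^0,\dots,U^{n-1}$ and on $f^{n-1/2}$) to the right, the equation has the schematic form
\begin{equation*}
\frac{1}{k_n}U^n_j + \text{(nonlinear convection in }U^n_j) - \frac{k_n^{\alpha-1}}{2\Gamma(\alpha+2)}\delta_x^2 U^n_j = \Phi_j(U^n; \text{history}),
\end{equation*}
where $\Phi$ is continuous in $U^n$. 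Solving the linear part for $U^n$ (the operator $\tfrac{1}{k_n}I - \tfrac{k_n^{\alpha-1}}{2\Gamma(\alpha+2)}\delta_x^2$ is symmetric positive definite, hence invertible) defines a continuous map $\mathcal{G}:\mathbf{W}_h\to\mathbf{W}_h$, and fixed points of $\mathcal{G}$ are exactly solutions $U^n$.

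Next I would verify the hypotheses of Leray–Schauder: continuity of $\mathcal{G}$ is clear since all the nonlinearities ($\nabla U^n \Delta_x U^n$) are polynomial in the components of $U^n$; the key requirement is an a priori bound, i.e.\ that there is a constant $M$ (independent of the Leray–Schauder parameter $\lambda\in[0,1]$) such that any solution $U^n$ of $U^n = \lambda\,\mathcal{G}(U^n)$ satisfies $\|U^n\| \leq M$. To obtain this bound I would take the discrete inner product of the (rescaled) equation with $U^n$. Here Lemma~\ref{lemma4.3} is exactly the tool that kills the nonlinear term: writing $\nabla U^n_j \Delta_x U^n_j = \tfrac{1}{6h}(U^n_{j-1}+U^n_j+U^n_{j+1})(U^n_{j+1}-U^n_{j-1})$ and pairing against $U^n$, the antisymmetric structure identified in Lemma~\ref{lemma4.3}(i) (or the combination in (ii)–(iii)) makes the convection contribution vanish, or at worst reduces it to a term that is absorbed. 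Lemma~\ref{lemma4.1} converts the $\langle \delta_x^2 U^n, U^n\rangle$ term into $-h\sum |\delta_x U^n_{s+1}|^2 \leq 0$, which has a favorable sign on the left. What remains is $\|U^n\|^2/k_n \leq \lambda\langle \Phi, U^n\rangle$, and a Cauchy–Schwarz plus Young's inequality on the right yields $\|U^n\| \leq k_n\|\Phi\| =: M$, with $M$ depending only on $k_n$, the already-constructed $U^0,\dots,U^{n-1}$, and $f^{n-1/2}$ — in particular independent of $\lambda$. Leray–Schauder then gives a fixed point $U^n$, completing the induction.

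The main obstacle is the treatment of the nonlinear convection term in the a priori estimate: one must be careful that the discrete product rule in Lemma~\ref{lemma4.3} really does apply to the specific operator $\nabla U^n_j \Delta_x U^n_j$ appearing in the scheme (note $\nabla$ uses the $\tfrac13$-average and $\Delta_x$ uses the $\tfrac{1}{2h}$ central difference, so up to the factor $\tfrac{1}{3h}$ this is $\tfrac{1}{3h}\langle (T_-+I+T_+)U^n \cdot \Delta U^n, U^n\rangle$), and to confirm that pairing it against $U^n$ yields exactly zero (or a sign-definite / absorbable remainder) via a summation-by-parts identity of the type proved in Lemma~\ref{lemma4.3}. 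Once that cancellation is in hand the rest is routine. A secondary point worth stating carefully is that the history terms $I^{(\alpha)}$ applied to $U^1,\dots,U^{n-1}$ contribute to $\Phi$ but, crucially, not to the part of the equation that depends on $U^n$ (except through the single diagonal weight $\widehat{w}_{nn}$), so the fixed point map is genuinely well-defined at each step; I would remark that this is why the argument is inductive rather than a single global fixed point statement.
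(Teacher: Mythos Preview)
Your overall strategy---induction in $n$, a Leray--Schauder fixed-point argument at each step, Lemma~\ref{lemma4.1} for the sign of the diffusive term, and Lemma~\ref{lemma4.3} to kill the convection term---is exactly the paper's. But there is a concrete slip that breaks the a~priori estimate as you have set it up.

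For $n\ge 2$ the nonlinear term in \eqref{eq3.20} is $\nabla U^{n-1/2}\,\Delta_x U^{n-1/2}$, \emph{not} $\nabla U^{n}\,\Delta_x U^{n}$. If you take $U^n$ as the fixed-point variable and pair the equation with $U^{n}$, Lemma~\ref{lemma4.3}(i) does not apply directly: writing $U^{n}=2U^{n-1/2}-U^{n-1}$ you get
\[
\langle \nabla U^{n-1/2}\,\Delta_x U^{n-1/2},\,U^{n}\rangle
= -\,\langle \nabla U^{n-1/2}\,\Delta_x U^{n-1/2},\,U^{n-1}\rangle,
\]
which is quadratic in the unknown $U^{n}$ (through $U^{n-1/2}$), has no sign, and cannot be absorbed by the $\tfrac{1}{k_n}\|U^{n}\|^{2}$ term on the left. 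Your ``$\|U^n\|\le k_n\|\Phi\|$'' bound therefore does not close.

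The paper avoids this by taking $V:=U^{n-1/2}$ as the unknown and writing the fixed-point map \emph{without} inverting any linear operator: $V=\mathscr{Q}(V)+\mathcal{W}$ with
\[
\mathscr{Q}(V)=-\tfrac{k_n}{12h}\bigl(V\Delta V+\Delta V^{2}\bigr)+\tfrac{1}{2}k_n^{2}\widehat{w}_{nn}\,\delta_x^{2}V,
\]
and $\mathcal{W}$ collecting the history and forcing. Then Lemma~\ref{lemma4.3}(i) with $w=v=V$ and Lemma~\ref{lemma4.1} give $\langle\mathscr{Q}(V),V\rangle\le 0$ exactly, so from $\lambda V=\mathscr{Q}(V)+\mathcal{W}$ one gets $\lambda\|V\|^{2}\le\|\mathcal{W}\|\,\|V\|$, which contradicts $\lambda>1$ on the boundary of a large enough ball. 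This is the precise point where the choice of variable matters; switching your argument from $U^{n}$ to $U^{n-1/2}$ (and dispensing with the inversion of $\tfrac{1}{k_n}I-\tfrac{k_n^{\alpha-1}}{2\Gamma(\alpha+2)}\delta_x^{2}$, which is unnecessary) repairs the gap and reproduces the paper's proof.
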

	\begin{proof} With $U^{0}\in\mathbb{R}^{J-1}$, we first utilize \cite[Section 3]{Lopez-Marcos} to show that \eqref{eq3.19} has a solution $U^1$. Then we need to apply the mathematical induction to show that, provided $U^{s}$ for $2 \leq s \leq n-1$, the equation \eqref{eq3.20} for $U^n$ has a solution.

  \vskip 0.2mm
		Below define the mapping $\mathscr{Q}$: $\mathbb{R}^{J-1}\rightarrow\mathbb{R}^{J-1}$ by
		\begin{equation*}
		    \mathscr{Q}(v):=-\frac{k_n}{12h}(v\Delta v + \Delta(v)^{2}) + \frac{1}{2}k^2_n\widehat{w}_{nn}\delta_{x}^{2}v,
		\end{equation*}
		then $U^{n}$ is a solution of equation \eqref{eq3.20} if and only if
		\begin{equation*}
           \begin{array}{ll}
		       U^{n-1/2}=\mathscr{Q}(U^{n-1/2}) + \mathcal{W},
		 \end{array}
		\end{equation*}
		where
		\begin{equation*}
		 \mathcal{W} = U^{n-1} + \frac{1}{2}k_nk_1\widehat{w}_{n1}\delta_{x}^{2}U^1 + \frac{1}{2}k_n\sum\limits_{s=2}^{n-1}k_s\widehat{w}_{ns}\delta_{x}^{2}U^{s-1/2} + \frac{1}{2}k_n f^{n-1/2}.
		\end{equation*}

  \vskip 0.2mm
		Therefore, we have to illustrate that the mapping $\mathcal{P}(\cdot)=\mathscr{Q}(\cdot)+\mathcal{W}$ has a fixed point. Next, we consider an open ball
       $\mathscr{D}=\mathcal{B}(0,r)$ in $\mathbb{R}^{J-1}$ endowed with the norm $\|\cdot\|$ in \eqref{eq4.1}. Assume that for $\lambda>1$ and $U^{n-1/2}$ in the boundary of $\mathscr{D}$,
		\begin{equation}\label{eq4.2}
          \begin{array}{ll}
		       \lambda U^{n-1/2}=\mathcal{P}(U^{n-1/2})=\mathscr{Q}(U^{n-1/2}) + \mathcal{W}.
         \end{array}
		\end{equation}

  \vskip 0.2mm
        Then using Lemmas \ref{lemma4.1}-\ref{lemma4.3}, we have
        \begin{equation*}
		    \big\langle \mathscr{Q}(U^{n-1/2}),  U^{n-1/2}  \big\rangle \leq  0.
		\end{equation*}

  \vskip 0.2mm
		Now, we take the inner product of \eqref{eq4.2} with $U^{n-1/2}$ and apply the Cauchy-Schwarz inequality, then
		\begin{equation*}
		   \lambda \|U^{n-1/2}\|^2 \leq \langle \mathcal{W}, U^{n-1/2} \rangle \leq \|\mathcal{W}\|  \|U^{n-1/2}\|  \leq  \frac{1}{2}(\|\mathcal{W}\|^2 + \|U^{n-1/2}\|^2),
		\end{equation*}
	thus,
		\begin{equation*}
		    \lambda \leq  \frac{1}{2}\left( \frac{\|\mathcal{W}\|^2}{\|U^{n-1/2}\|^2} + 1 \right)  \leq  \frac{\|\mathcal{W}\|^{2}}{2r^{2}}  + \frac{1}{2}.
		\end{equation*}

  \vskip 0.2mm	
		As $r$ large, the above inequality contradicts with the assumption $\lambda>1$. Hence, \eqref{eq4.2} has no solution on
        $\partial\mathscr{D}$. Then employing the Leray-Schauder theorem (cf.~\cite{Ortega}, Thm.~6.3.3), we yield the existence of a fixed point of the mapping $\mathcal{P}$ in the closure of $\mathscr{D}$.
       \end{proof}

     \subsection{Stability analysis}
  \vskip 0.2mm
     We below use the energy method to prove the stability of the second-order implicit difference scheme \eqref{eq3.19}-\eqref{eq3.22}.

  \vskip 0.2mm
     \begin{theorem}\label{th4.2}
     	Supposing that $\{U_j^n \,|\, 1\leq j\leq J-1,\,\, 1\leq n\leq N\}$ is the solution of the second-order implicit difference scheme \eqref{eq3.19}-\eqref{eq3.22}, for $1\leq n\leq N$, we have
     \begin{equation*}
     \begin{array}{ll}
         \|U^n\| \leq  \|U^0\| + 2 \sum\limits_{l=1}^n k_l \|f^{l-1/2}\|.
     \end{array}
     \end{equation*}
     \end{theorem}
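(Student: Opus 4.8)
The plan is to run a discrete energy argument on the scheme \eqref{eq3.19}--\eqref{eq3.20}, with the norms and inner product of \eqref{eq4.1}. Fix $1\le m\le N$. For $2\le n\le N$ I take the inner product of \eqref{eq3.20} with $U^{n-1/2}$, and for $n=1$ I pair \eqref{eq3.19} with $U^{1}$; note that in both cases the test function is precisely the constant value of the piecewise‑constant reconstruction $\overline U$ on $(t_{n-1},t_n)$ (it is $U^{1}$ for $n=1$ and $U^{n-1/2}$ for $n\ge 2$), a fact that will be crucial for the memory term. The convection term vanishes in each case: an elementary rearrangement gives $\nabla w_j\,\Delta_x w_j=\tfrac1{6h}\bigl(w_j(w_{j+1}-w_{j-1})+(w_{j+1}^2-w_{j-1}^2)\bigr)$, so that in the notation of Lemma~\ref{lemma4.3}, $\langle\nabla w\,\Delta_x w,\,w\rangle=\tfrac1{6h}\langle w\Delta w+\Delta(w)^2,\,w\rangle=0$ by part~(i) with $v=w$. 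For the time term, $\langle\delta_tU^{n-1/2},U^{n-1/2}\rangle=\tfrac1{2k_n}\bigl(\|U^n\|^2-\|U^{n-1}\|^2\bigr)$ when $n\ge 2$, while for $n=1$ the identity $\langle U^1-U^0,U^1\rangle=\tfrac12\bigl(\|U^1\|^2-\|U^0\|^2+\|U^1-U^0\|^2\bigr)$ yields the lower bound $\langle\delta_tU^{1/2},U^1\rangle\ge\tfrac1{2k_1}\bigl(\|U^1\|^2-\|U^0\|^2\bigr)$.

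Multiplying each resulting relation by $2k_n$ and summing over $n=1,\dots,m$, the time terms telescope and I obtain
\[
\|U^m\|^2-\|U^0\|^2\ \le\ 2\sum_{n=1}^{m}k_n\bigl\langle I^{(\alpha)}\delta_x^2U^{n-1/2},\,\Phi^n\bigr\rangle\ +\ 2\sum_{n=1}^{m}k_n\bigl\langle f^{n-1/2},\,\Phi^n\bigr\rangle,
\]
where $\Phi^1:=U^1$ and $\Phi^n:=U^{n-1/2}$ for $n\ge 2$. The decisive step — and the one I expect to be the main obstacle — is to show that the memory (diffusion) sum is nonpositive. My plan is to use the integral representation in \eqref{eq2.2}, $k_nI^{(\alpha)}W^{n-1/2}=\int_{t_{n-1}}^{t_n}\mathcal I^{(\alpha)}\overline W(t)\,dt$, together with $\Phi^n=\overline U(t)$ on $(t_{n-1},t_n)$: the sum then collapses to $\int_0^{t_m}\langle\mathcal I^{(\alpha)}\delta_x^2\overline U(t),\,\overline U(t)\rangle\,dt$. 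Since the spatial operators $\delta_x,\delta_x^2$ commute with the temporal convolution $\mathcal I^{(\alpha)}$, Lemma~\ref{lemma4.1} rewrites this as $-\int_0^{t_m}h\sum_{s=0}^{J-1}\bigl(\mathcal I^{(\alpha)}\delta_x\overline U(t)\bigr)_{s+1}\bigl(\delta_x\overline U(t)\bigr)_{s+1}\,dt$, which is $\le 0$ because the Abel kernel $\beta(t)=t^{\alpha-1}/\Gamma(\alpha)$ is of positive type, i.e. $\int_0^{t_m}\!\!\int_0^t\beta(t-\zeta)\phi(\zeta)\phi(t)\,d\zeta\,dt\ge 0$ for every scalar $\phi\in L^2(0,t_m)$, applied to each spatial component with the positive weights $h$. (Equivalently one may invoke positive definiteness of the quadrature-weight matrix built from the $\widehat w_{ns}$ of \eqref{eq2.3}--\eqref{eq2.5}; I would cite whichever form is cleanest.) Discarding this nonpositive term and using Cauchy--Schwarz on the source term leaves $\|U^m\|^2\le\|U^0\|^2+2\sum_{n=1}^{m}k_n\|f^{n-1/2}\|\,\|\Phi^n\|$.

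It remains to close the estimate. Since $\|\Phi^1\|=\|U^1\|$ and $\|\Phi^n\|=\|U^{n-1/2}\|\le\tfrac12(\|U^n\|+\|U^{n-1}\|)$ for $n\ge 2$, all $\|\Phi^n\|$ are bounded by $M_m:=\max_{0\le l\le m}\|U^l\|$, and the displayed estimate holds verbatim with $m$ replaced by any $m'\le m$ (and trivially for $m'=0$); hence $M_m^2\le\|U^0\|^2+2M_m F$ with $F:=\sum_{l=1}^{m}k_l\|f^{l-1/2}\|$. Solving this quadratic inequality in $M_m$ and using $\sqrt{a+b}\le\sqrt a+\sqrt b$ gives $M_m\le F+\sqrt{F^2+\|U^0\|^2}\le\|U^0\|+2F$, so in particular $\|U^m\|\le\|U^0\|+2\sum_{l=1}^{m}k_l\|f^{l-1/2}\|$, which is the assertion since $m$ was arbitrary. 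Everything except the sign of the memory term is routine: the choice of test functions is forced by the need to annihilate the convection term through Lemma~\ref{lemma4.3}, and the rest is telescoping plus Cauchy--Schwarz; so I would concentrate the effort on a clean statement and proof of the positivity of $\mathcal I^{(\alpha)}$ (equivalently, of the PI weights).
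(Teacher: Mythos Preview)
Your argument is correct and follows essentially the same route as the paper: pair \eqref{eq3.19} with $U^1$ and \eqref{eq3.20} with $U^{n-1/2}$, kill the convection term via Lemma~\ref{lemma4.3}(i), telescope the time derivative as in \eqref{eq4.6}, invoke positivity of the Abel convolution (the paper simply cites \cite{McLean} for \eqref{eq4.7} where you spell out the integral argument), apply Cauchy--Schwarz on the source, and close with the max trick. The only cosmetic difference is in the last step: you solve the quadratic $M_m^2\le\|U^0\|^2+2FM_m$, whereas the paper uses $\|U^0\|^2\le\|U^0\|\,\|U^M\|$ to divide through directly---both yield the same bound.
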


  \vskip 0.2mm
     \begin{proof} Taking the inner product of \eqref{eq3.19}-\eqref{eq3.20} with $U^{1}$ and $U^{n-1/2}$, respectively, we get
 \begin{equation}\label{eq4.3}
 \begin{split}
   \langle \delta_tU^{\frac{1}{2}} ,  U^{1}\rangle  & +  \langle\nabla U^{1} \Delta_x  U^{1} ,  U^{1}\rangle  = \langle I^{(\alpha)} \delta_x^2U^{1/2} ,  U^{1}\rangle
    + \langle f^{1/2} ,  U^{1}\rangle,
 \end{split}
  \end{equation}
  and
 \begin{equation}\label{eq4.4}
 \begin{split}
   \langle \delta_tU^{n-\frac{1}{2}} , U^{n-1/2} \rangle & + \langle \nabla U^{n-\frac{1}{2}} \Delta_x  U^{n-\frac{1}{2}}, U^{n-1/2} \rangle  = \langle  I^{(\alpha)}\delta_x^2U^{n-\frac{1}{2}}, U^{n-1/2} \rangle   \\
   & + \langle f^{n-\frac{1}{2}}, U^{n-1/2} \rangle, \qquad  2\leq n\leq N,
 \end{split}
  \end{equation}

  \vskip 0.2mm
    Then, employing Lemmas \ref{lemma4.3} and \ref{lemma4.1}, and for $N\geq 1$, we yield
     \begin{equation}\label{eq4.5}
      \begin{split}
       k_1\langle \delta_tU^{\frac{1}{2}} ,  U^{1}\rangle & +  \sum\limits_{n=2}^Nk_n \langle \delta_tU^{n-\frac{1}{2}}, U^{n-\frac{1}{2}} \rangle
        = k_1\langle f^{\frac{1}{2}} ,  U^{1}\rangle + \sum\limits_{n=2}^N k_n \langle f^{n-1/2}, U^{n-1/2}\rangle \\
        & - \left(  k_1\langle  I^{(\alpha)}\delta_xU^{\frac{1}{2}}, \delta_xU^{1} \rangle +  \sum\limits_{n=2}^N k_n \langle  I^{(\alpha)}\delta_xU^{n-\frac{1}{2}}, \delta_xU^{n-1/2} \rangle \right).
     \end{split}
    \end{equation}

  \vskip 0.2mm     	
     	Each term in \eqref{eq4.5} will be bounded one by one. At first, utilizing
     	\begin{equation*}
      \begin{split}	
     	\langle\delta_tU^{n-1/2}, U^{n}\rangle & =\frac{1}{2k_n}\langle U^{n}-U^{n-1}, U^{n}-U^{n-1}+U^{n}+U^{n-1} \rangle  \\
        &\geq \frac{1}{2k_n}(||U^{n}||^2-||U^{n-1}||^2),
        \end{split}
     	\end{equation*}
     	we have
     	\begin{equation}\label{eq4.6}
        \begin{array}{ll}
     	          k_1\langle \delta_tU^{\frac{1}{2}} ,  U^{1}\rangle  +  \sum\limits_{n=2}^N k_n \langle \delta_tU^{n-\frac{1}{2}}, U^{n-\frac{1}{2}} \rangle
                \geq  \frac{||U^{N}||^2-||U^{0}||^2}{2}.
         \end{array}
     	\end{equation}

   \vskip 0.2mm    	
     	Secondly, from \cite[pp.~483-485, (1.14)]{McLean}, we can obtain
     \begin{equation}\label{eq4.7}
      \begin{split}
         \left(  k_1\langle  I^{(\alpha)}\delta_xU^{\frac{1}{2}}, \delta_xU^{1} \rangle +  \sum\limits_{n=2}^N k_n \langle  I^{(\alpha)}\delta_xU^{n-\frac{1}{2}}, \delta_xU^{n-1/2} \rangle \right) \geq 0.
     \end{split}
    \end{equation}

   \vskip 0.2mm    	
     	Then, by substituting \eqref{eq4.6} and \eqref{eq4.7} into \eqref{eq4.5}, and utilizing the Cauchy-Schwarz inequality, we have
     	\begin{equation*}
        \begin{array}{ll}
     	\|U^N\|^2  \leq  \|U^0\|^2 + 2k_1\| f^{\frac{1}{2}}\|  \|  U^{1}\|  + 2\sum\limits_{n=2}^N k_n \|f^{n-1/2}\|\|U^{n-1/2}\|.
        \end{array}
     	\end{equation*}

  \vskip 0.2mm
     	By choosing a suitable $M$ such that $\|u^M\|=\max\limits_{0\leq n\leq N}{\|U^n\|}$, we can yield
     	\begin{equation}\label{eq4.8}
        \begin{array}{ll}
     	   \|U^M\|\leq  \|U^0\|  + 2\sum\limits_{n=1}^M k_n \|f^{n-1/2}\|
        \leq  \|U^0\| + 2\sum\limits_{n=1}^N k_n \|f^{n-1/2}\|.
        \end{array}
     	\end{equation}
     	We finish the proof.
     \end{proof}

     \subsection{Convergence analysis}

     \vskip 0.2mm
     Here, we consider the convergence of the second-order implicit difference scheme \eqref{eq3.19}-\eqref{eq3.22}.

    \vskip 0.2mm
     Denote
     $$\left.
     \begin{array}{ll}
     e_j^n=u_j^n-U_j^n,   \qquad 0\leq j\leq J,   \qquad 0\leq n\leq N.
     \end{array}
     \right.$$

    \vskip 0.2mm
     By subtracting \eqref{eq3.19}-\eqref{eq3.22} from \eqref{eq3.14}-\eqref{eq3.15}, \eqref{eq3.17}-\eqref{eq3.18}, respectively, we obtain the error equations as follows
 \begin{equation}\label{eq4.9}
 \begin{split}
   \delta_te_j^{\frac{1}{2}} & -  \nabla e^{1}_j \Delta_x  e^{1}_j = I^{(\alpha)} \delta_x^2e_j^{1/2}
    + \sum\limits_{m=1}^{4} (R_m)_j^{1/2},  \qquad 1\leq j \leq J-1,
 \end{split}
  \end{equation}
 \begin{equation}\label{eq4.10}
 \begin{split}
   \delta_te_j^{n-\frac{1}{2}} & - \nabla e^{n-\frac{1}{2}}_j \Delta_x  e^{n-\frac{1}{2}}_j  = I^{(\alpha)}\delta_x^2e_j^{n-\frac{1}{2}}  + \sum\limits_{m=1}^{4} (R_m)_j^{n-1/2}, \\
   & \qquad 1\leq j \leq J-1, \qquad 2\leq n\leq N,
 \end{split}
  \end{equation}
     \begin{equation}\label{eq4.11}
     \begin{array}{ll}
     e_j^0=0, \qquad 0\leq j\leq J,  \qquad   e_0^n=e_J^n=0, \qquad 1\leq n\leq N,
     \end{array}
     \end{equation}
from which,
  $$\left.
  \begin{array}{ll}
  \qquad (R_3)^{1/2}_{j} = -\frac{1}{6h}\Big[  u_j^1\Delta e_j^1+\Delta (e_j^1u_j^1) \Big],     \\
  \qquad (R_3)^{n-1/2}_{j} = -\frac{1}{6h}\Big[ u_j^{n-1/2}\Delta e_j^{n-1/2}+\Delta (e_j^{n-1/2}u_j^{n-1/2})\Big],   \qquad 2\leq n \leq N,  \\
  \qquad (R_4)^{1/2}_{j} = -\frac{1}{6h}\Big[ e_j^1\Delta u_j^1+\Delta (e_j^1u_j^1)\Big],  \\
  \qquad (R_4)^{n-1/2}_{j} = -\frac{1}{6h}\Big[ e_j^{n-1/2}\Delta u_j^{n-1/2}+\Delta (e_j^{n-1/2}u_j^{n-1/2})\Big],  \qquad 2\leq n \leq N.
  \end{array}
  \right. $$

  \vskip 0.2mm
 In order to obtain the convergence, we present the following lemmas.

   \vskip 0.2mm
   \begin{lemma}\label{lemma4.5} If satisfying the conditions
   \begin{equation}\label{eq4.12}
  \begin{split}
      t\|u_{xx}'(\cdot,t)\| +  t^2\|u_{xx}''(\cdot,t)\| \leq \mathcal{M} t^{\sigma-1},
  \end{split}
  \end{equation}
    \begin{equation}\label{eq4.13}
  \begin{split}
       t\| f'(\cdot,t)\| +  t^2\| f''(\cdot,t)\| \leq \mathcal{M} t^{\sigma-1}, \qquad  \sigma >0,
  \end{split}
  \end{equation}
 then we have
   $$\left.
   \begin{array}{ll}
      \sum\limits_{n=1}^N k_n \left( \left\|(R_1)^{n-\frac{1}{2}}\right\| + \left\|(R_2)^{n-\frac{1}{2}}\right\|  \right)  \leq  C\Big(\mathcal{K}_{L}+h^2\Big),
    \end{array}
   \right.
   $$
where
      \begin{equation*}
     \begin{split}
       \mathcal{K}_{L} :=
        \begin{cases}
          k^{\gamma\sigma}, & \mbox{if } 1\leq \gamma < \frac{2}{\sigma}, \\
          k^{2}\log(t_N/t_1), & \mbox{if }  \gamma = \frac{2}{\sigma}, \\
          k^2, & \mbox{if } \gamma > \frac{2}{\sigma}.
        \end{cases}
     \end{split}
     \end{equation*}
    \end{lemma}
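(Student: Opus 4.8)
\textbf{Proof proposal for Lemma \ref{lemma4.5}.}

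The plan is to estimate the two families of truncation terms separately, exploiting the splitting $(R_1)^{n-1/2}=(R_{11})^{n-1/2}+(R_{12})^{n-1/2}$ from \eqref{eq3.3}--\eqref{eq3.4} together with the bound \eqref{eq3.16} for $(R_2)^{n-1/2}$. For the source-term contributions $(R_{11})^{n-1/2}$, I would invoke the quadrature error estimates \eqref{eq2.7}--\eqref{eq2.8}: on the first interval $k_1\|(R_{11})^{1/2}\|\le C\int_{t_0}^{t_1} t\|f'(\cdot,t)\|\,dt$, and on later intervals $k_n\|(R_{11})^{n-1/2}\|\le C k_n^2\int_{t_{n-1}}^{t_n}\|f''(\cdot,t)\|\,dt$. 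Using the hypothesis \eqref{eq4.13} to replace $t\|f'\|$ by $\mathcal M t^{\sigma-1}$ and $t^2\|f''\|$ by $\mathcal M t^{\sigma-1}$, the first term is $O(t_1^\sigma)=O(k^{\gamma\sigma})$ by \eqref{eq2.10}, and the tail becomes $\sum_{n\ge2}C k_n^2\int_{t_{n-1}}^{t_n}t^{\sigma-3}\,dt$. The same analysis applies verbatim to $(R_{12})^{n-1/2}$ after noticing that $u_{xx}-\overline u_{xx}$ is exactly the interpolation defect handled in \cite{McLean}, so the relevant bound reads $k_n\|(R_{12})^{n-1/2}\|\le C\int_{t_0}^{t_1}t\|u_{xx}'(\cdot,t)\|\,dt$ for $n=1$ and $\le C k_n^2\int_{t_{n-1}}^{t_n}\|u_{xx}''(\cdot,t)\|\,dt$ for $n\ge2$, and \eqref{eq4.12} supplies the matching $\mathcal M t^{\sigma-1}$ control. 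Finally, $(R_2)^{n-1/2}$ is dominated by \eqref{eq3.16}; summing $\sum_n k_n(k_n+h^2)$ contributes the $h^2$ and an $O(k)$ (hence harmless) term, while the factor $(t_n^\alpha-t_{n-1}^\alpha)h^2/(k_n\Gamma(\alpha+1))$ telescopes: $\sum_n k_n\cdot(t_n^\alpha-t_{n-1}^\alpha)h^2/k_n = h^2\sum_n(t_n^\alpha-t_{n-1}^\alpha)=h^2 t_N^\alpha\le C h^2$.

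The crux is therefore the nonuniform-mesh estimate of $S:=\sum_{n=2}^N k_n^2\int_{t_{n-1}}^{t_n}t^{\sigma-3}\,dt$ together with the first-interval term $t_1^\sigma\asymp k^{\gamma\sigma}$, and a case distinction on $\gamma$ versus $2/\sigma$ is what produces $\mathcal K_L$. On each interval with $n\ge2$ the mesh assumptions \eqref{eq2.9}--\eqref{eq2.10} give $k_n\le \widetilde C_\gamma k\,t_n^{1-1/\gamma}$ and $t_{n-1}\ge \widetilde C_\gamma^{-1}t_n$, so $k_n^2\int_{t_{n-1}}^{t_n}t^{\sigma-3}dt\le C k^2 t_n^{2-2/\gamma}\cdot t_n^{\sigma-3}k_n = C k^3 t_n^{\sigma+1-3/\gamma}$; summing against $k_n\le Ck\,t_n^{1-1/\gamma}$ one more time (i.e. comparing the sum with an integral $\int_{t_1}^{t_N} t^{\sigma-2/\gamma-1}\,dt$ after absorbing one $k_n$) yields $S\le C k^2\int_{t_1}^{t_N} t^{\sigma-1-2/\gamma}\,dt$. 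When $\gamma>2/\sigma$ the exponent $\sigma-2/\gamma>0$, the integral is bounded by $t_N^{\sigma-2/\gamma}\le C$, and $S=O(k^2)$; the first-interval term $k^{\gamma\sigma}$ is then $o(k^2)$ since $\gamma\sigma>2$, so the total is $O(k^2)$. When $\gamma=2/\sigma$ the integrand is $t^{-1}$, giving $S\le C k^2\log(t_N/t_1)$, which matches the middle branch, and $k^{\gamma\sigma}=k^2$ is absorbed. When $1\le\gamma<2/\sigma$ the exponent is negative, $\int_{t_1}^{t_N}t^{\sigma-1-2/\gamma}dt\le C t_1^{\sigma-2/\gamma}$, so $S\le C k^2 t_1^{\sigma-2/\gamma}\le C k^2 (k^\gamma)^{\sigma-2/\gamma}=C k^{\gamma\sigma}$ by \eqref{eq2.10}, and this dominates the bare $k^{\gamma\sigma}$ first-interval term; the total is $O(k^{\gamma\sigma})$.

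I expect the main obstacle to be bookkeeping the mesh inequalities cleanly — in particular justifying the sum-to-integral comparison $\sum_{n=2}^N k_n\,t_n^{\sigma-1-2/\gamma}\le C\int_{t_1}^{t_N}t^{\sigma-1-2/\gamma}\,dt + (\text{endpoint})$, which needs $t_n/t_{n-1}\le\widetilde C_\gamma$ from \eqref{eq2.10} so that $t^{\sigma-1-2/\gamma}$ does not vary too much across one subinterval, and handling the sign of $\sigma-1-2/\gamma$ (the integrand may itself be increasing or decreasing, but monotonicity on $[t_1,t_N]$ plus the bounded ratio is enough in every case). The assumption \eqref{eq2.11} on $k_{n+1}-k_n$ is not needed for this lemma; it will matter only later for the second-order consistency of the PI weights, so I would not invoke it here. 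Assembling the three pieces $(R_{11}),(R_{12}),(R_2)$ and reading off the worst term gives exactly $C(\mathcal K_L+h^2)$, completing the proof.
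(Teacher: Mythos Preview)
Your overall strategy coincides with the paper's: invoke \cite[Corollary~4.3]{McLean} for the $(R_1)$ sum under the regularity hypotheses \eqref{eq4.12}--\eqref{eq4.13}, and handle $(R_2)$ directly from \eqref{eq3.16} via the telescoping $\sum_n(t_n^\alpha-t_{n-1}^\alpha)=t_N^\alpha$. The paper simply cites McLean and stops, whereas you spell out the graded-mesh summation $\sum_{n\ge2}k_n^2\int_{t_{n-1}}^{t_n}t^{\sigma-3}dt\le Ck^2\int_{t_1}^{t_N}t^{\sigma-1-2/\gamma}dt$ and the ensuing trichotomy, which is correct and is essentially the content of McLean's proof.

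Two small slips are worth fixing. First, in your $(R_2)$ paragraph you write ``summing $\sum_n k_n(k_n+h^2)$ contributes \ldots\ an $O(k)$ (hence harmless) term''. This is too weak: an $O(k)$ contribution is \emph{not} absorbed by $\mathcal K_L=k^2$ when $\gamma>2/\sigma$. You should keep the sharper bound from \eqref{eq3.16}, namely $k_n^2$ for $n\ge2$, which gives $k_1^2+\sum_{n\ge2}k_n^3\le Ck^2$; this is exactly what the paper records in \eqref{eq4.15}. Second, your termwise claim $k_n\|(R_{12})^{n-1/2}\|\le Ck_n^2\int_{t_{n-1}}^{t_n}\|u_{xx}''\|\,dt$ for $n\ge2$ is not literally correct: the convolution in \eqref{eq3.4} sees the interpolation defect $u_{xx}-\overline u_{xx}$ on \emph{all} earlier subintervals, not just $[t_{n-1},t_n]$. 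The correct statement (and what McLean proves) is a bound on the full sum $\sum_n k_n\|(R_{12})^{n-1/2}\|$ after interchanging the order of summation; since you defer to \cite{McLean} anyway, just drop the misleading termwise formulation.
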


  \begin{proof} With assumptions \eqref{eq4.12} and \eqref{eq4.13}, and from \cite[Corollary 4.3]{McLean}, we can get
     \begin{equation}\label{eq4.14}
     \begin{split}
            \sum\limits_{n=1}^N k_n\left\|(R_1)^{n-1/2}\right\| & \leq  \sum\limits_{n=1}^N k_n\left\|(R_{11})^{n-1/2}\right\| + \sum\limits_{n=1}^N k_n\left\|(R_{12})^{n-1/2}\right\| \\
            & \leq C_{\alpha,\gamma,\sigma,T}\mathcal{M}\times
       \begin{cases}
          k^{\gamma\sigma}, & \mbox{if } 1\leq \gamma < 2/\sigma, \\
          k^{2}\log(t_N/t_1), & \mbox{if }\gamma = 2/\sigma, \\
          k^2, & \mbox{if } \gamma > 2/\sigma.
        \end{cases}
     \end{split}
     \end{equation}
In addition, using \eqref{eq3.16}, we have
 \begin{equation}\label{eq4.15}
 \begin{split}
         \sum\limits_{n=1}^N & k_n\left\|(R_2)^{n-1/2}\right\|  =  k_1 \left\|(R_2)^{1/2}\right\|  + \sum\limits_{n=2}^N k_n\left\|(R_2)^{n-1/2}\right\|  \\
          & \leq Ck_1\left(k_1+h^2+ \left( \frac{t_1^{\alpha}-t_{0}^{\alpha}}{k_1\Gamma(\alpha+1)}\right)   h^2\right)+  C \sum\limits_{n=2}^N k_n \left(k_n^2 + h^2 + \left( \frac{t_n^{\alpha}-t_{n-1}^{\alpha}}{k_n\Gamma(\alpha+1)}\right)   h^2 \right) \\
          & \leq C(T)(k^2+h^2) + C \left(  \frac{t_1^{\alpha}-t_{0}^{\alpha}}{\Gamma(\alpha+1)} + \frac{t_N^{\alpha}-t_{1}^{\alpha}}{\Gamma(\alpha+1)} \right)h^2.
 \end{split}
  \end{equation}
Combining \eqref{eq4.14} and \eqref{eq4.15}, we finish the proof.
 \end{proof}

 \vskip 0.2mm
  \begin{lemma}\label{lemma4.6} If $u_0=u_J=0$ and $e_0=e_J=0$, it holds that
 $$\left.
  \begin{array}{ll}
        \langle (R_3)^{1/2} , e^1\rangle =0,  \quad  \langle (R_3)^{n-1/2} , e^{n-1/2}\rangle =0, \quad 2\leq n \leq N.
  \end{array}
 \right.
 $$
 \end{lemma}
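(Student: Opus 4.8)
The plan is to recognize that $(R_3)^{n-1/2}$ is, up to the factor $-\tfrac{1}{6h}$, exactly an expression of the form $\langle w\Delta v + \Delta(wv),\, v\rangle$ with $w = u^{n-1/2}$ (or $u^1$ for $n=1$) and $v = e^{n-1/2}$ (or $e^1$). Indeed, recalling
\[
(R_3)^{n-1/2}_j = -\frac{1}{6h}\Big[\,u_j^{n-1/2}\Delta e_j^{n-1/2} + \Delta\big(e_j^{n-1/2}u_j^{n-1/2}\big)\Big],
\]
we have $h\sum_{j=1}^{J-1}(R_3)^{n-1/2}_j e_j^{n-1/2} = -\tfrac{1}{6h}\,\langle u^{n-1/2}\Delta e^{n-1/2} + \Delta(u^{n-1/2}e^{n-1/2}),\, e^{n-1/2}\rangle$. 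Since $u^{n-1/2}\in\mathbf{W}_h$ (because $u_0^{n-1/2}=u_J^{n-1/2}=0$ by the boundary condition $u_0=u_J=0$) and $e^{n-1/2}\in\mathbf{W}_h$ (because $e_0^n = e_J^n = 0$), Lemma~\ref{lemma4.3}$(i)$ applies verbatim with $w = u^{n-1/2}$, $v = e^{n-1/2}$, giving
\[
\big\langle u^{n-1/2}\Delta e^{n-1/2} + \Delta(u^{n-1/2}e^{n-1/2}),\, e^{n-1/2}\big\rangle = 0,
\]
hence $\langle (R_3)^{n-1/2},\, e^{n-1/2}\rangle = 0$.

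The case $n=1$ is identical, using $w = u^1$, $v = e^1$ in Lemma~\ref{lemma4.3}$(i)$: since $u_0^1 = u_J^1 = 0$ and $e_0^1 = e_J^1 = 0$, both lie in $\mathbf{W}_h$, so $\langle u^1\Delta e^1 + \Delta(e^1u^1),\, e^1\rangle = 0$ and therefore $\langle (R_3)^{1/2},\, e^1\rangle = 0$.

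There is essentially no obstacle here; the only thing that requires a moment's care is verifying the membership hypotheses of Lemma~\ref{lemma4.3} — that the half-level grid functions $u^{n-1/2} = \tfrac12(u^n + u^{n-1})$ inherit the homogeneous boundary values from the assumption $u_0 = u_J = 0$, and likewise $e^{n-1/2}$ from $e_0 = e_J = 0$ in \eqref{eq4.11}. Once that is noted, the result is an immediate specialization of the algebraic identity already proved in Lemma~\ref{lemma4.3}$(i)$, so the proof is a two-line invocation of that lemma for each of the two cases $n=1$ and $2\le n\le N$.
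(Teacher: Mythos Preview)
Your proof is correct and takes exactly the same approach as the paper, which simply states ``The proof can be finished by Lemma~\ref{lemma4.3}.'' You have merely made explicit the substitution $w=u^{n-1/2}$, $v=e^{n-1/2}$ (respectively $w=u^1$, $v=e^1$) in Lemma~\ref{lemma4.3}(i) and checked the boundary hypotheses, which is precisely what the one-line proof in the paper intends.
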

 \begin{proof} The proof can be finished by Lemma \ref{lemma4.3}.
 \end{proof}

 \vskip 0.2mm
  \begin{lemma}\label{lemma4.7} If $u_0=u_J=0$ and $e_0=e_J=0$, then we get
 $$\left.
  \begin{array}{ll}
 \left|\langle (R_4)^{1/2} , e^1\rangle \right| \leq \frac{c_0}{2}  \| e^1\|^2,  \quad  \left|\langle (R_4)^{n-1/2} , e^{n-1/2}\rangle \right| \leq \frac{c_0}{2}  \| e^{n-1/2}\|^2, \quad 2\leq n \leq N.
  \end{array}
 \right.
 $$
 \end{lemma}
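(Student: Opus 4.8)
\vskip 0.2mm
The plan is to unfold $\langle (R_4)^{1/2},e^1\rangle$ from the definition $(R_4)^{1/2}_j=-\frac{1}{6h}\big[e_j^1\Delta u_j^1+\Delta(e_j^1u_j^1)\big]$, writing it as $-\frac{1}{6h}\big(\langle e^1\Delta u^1,e^1\rangle+\langle\Delta(e^1u^1),e^1\rangle\big)$, and to bound each of the two inner products by a fixed multiple of $h\|e^1\|^2$; the prefactor $\frac{1}{6h}$ then absorbs the $h$ and leaves the constant $\frac{c_0}{2}$. The case $2\le n\le N$ is handled word-for-word in the same way with $u^{n-1/2}=\frac12(u^n+u^{n-1})$ and $e^{n-1/2}$ replacing $u^1$ and $e^1$, because the first differences of $u^{n-1/2}$ inherit the bounds $|u^{n-1/2}_{j+1}-u^{n-1/2}_{j-1}|\le 2hc_0$ and $|\Delta_\pm u^{n-1/2}_j|\le hc_0$ from $c_0\ge\max|u_x|$, and $u^{n-1/2}_0=u^{n-1/2}_J=0$; so I would only spell out $n=1$.

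For the first inner product I would simply write $\langle e^1\Delta u^1,e^1\rangle=h\sum_{j=1}^{J-1}(e_j^1)^2(u_{j+1}^1-u_{j-1}^1)$ and use the elementary estimate $|u_{j+1}^1-u_{j-1}^1|=\big|\int_{x_{j-1}}^{x_{j+1}}u_x(x,t_1)\,dx\big|\le 2hc_0$, which gives at once $|\langle e^1\Delta u^1,e^1\rangle|\le 2hc_0\|e^1\|^2$. No summation by parts is needed here, since the troublesome factor is already a difference of the smooth function $u$ across two mesh cells.

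The second inner product is the delicate one: a crude bound using $|u_j^1|\le c_0$ only yields $|\langle\Delta(e^1u^1),e^1\rangle|\le C\|e^1\|^2$, which is worthless once multiplied by $\frac1h$, so I must first shift a difference onto $u$. I would apply Lemma \ref{lemma4.2} with $w=u^1$ and $v=e^1$ (both belong to $\mathbf{W}_h$ thanks to $u_0=u_J=0$ and $e_0=e_J=0$) to get $\langle\Delta(e^1u^1),e^1\rangle=\frac12\langle T_+e^1\,\Delta_+u^1+T_-e^1\,\Delta_-u^1,\,e^1\rangle$. Since $|\Delta_\pm u_j^1|\le hc_0$ by the same mean-value argument and $\sum_j|e_{j\pm1}^1|\,|e_j^1|\le\frac1h\|e^1\|^2$ by AM--GM together with $e_0^1=e_J^1=0$, this produces $|\langle\Delta(e^1u^1),e^1\rangle|\le hc_0\|e^1\|^2$.

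Adding the two bounds yields $|\langle (R_4)^{1/2},e^1\rangle|\le\frac{1}{6h}\big(2hc_0+hc_0\big)\|e^1\|^2=\frac{c_0}{2}\|e^1\|^2$, and the identical computation with $u^{n-1/2},e^{n-1/2}$ settles $2\le n\le N$. The main obstacle is precisely this apparent $\frac1h$ singularity in the weight of $(R_4)$: it is neutralised not by a single magical cancellation but by the structural fact that, after moving a first difference onto the smooth factor $u$ (directly in the $e\Delta u$ term and via Lemma \ref{lemma4.2} in the $\Delta(eu)$ term), every surviving coefficient carries an explicit factor $h$.
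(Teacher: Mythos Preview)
Your proof is correct and follows essentially the same approach as the paper: both split $\langle(R_4)^{n-1/2},e^{n-1/2}\rangle$ into the $e\Delta u$ and $\Delta(eu)$ pieces, bound the first directly via $|\Delta u_j|\le 2hc_0$, invoke Lemma~\ref{lemma4.2} to transfer a difference onto $u$ in the second, and combine the resulting $2hc_0+hc_0=3hc_0$ with the prefactor $\tfrac{1}{6h}$ to obtain $\tfrac{c_0}{2}$. The only cosmetic difference is that the paper keeps the argument in terms of $\|\Delta u^m\|_\infty$, $\|\Delta_\pm u^m\|_\infty$ and inserts the mean-value bounds at the very end, whereas you substitute them immediately.
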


 \begin{proof} From Lemma \ref{lemma4.2}, we have
\begin{equation*}
  \begin{array}{ll}
  \langle\Delta (e^mu^m),e^m\rangle=\frac{1}{2}\langle T_+e^m\Delta_+u^m+T_-e^m\Delta_-u^m,e^m\rangle.
  \end{array}
\end{equation*}

  \vskip 0.2mm
 With conditions $u_0=u_J=0$, $e_0=e_J=0$ and using Cauchy-Schwarz inequality, we yield that
\begin{equation*}
  \begin{array}{ll}
  |\langle T_+e^m\Delta_+u^m,e^m\rangle|\leq \| \Delta_+u^m \|_{\infty}|\langle T_+e^m,e^m\rangle|\\
  =\| \Delta_+u^m \|_{\infty}\left|\sum\limits_{s=1}^{J-1}h e_{s+1}^me_s^m\right|
  \leq\| \Delta_+u^m \|_{\infty}\sum\limits_{s=1}^{J-1}\frac{h}{2}((e_{s+1}^m)^2+(e_s^m)^2)\\
  =\frac{1}{2}\| \Delta_+u^m \|_{\infty}\left(\sum\limits_{s=1}^{J-1}h{(e_{s+1}^m)}^2 +\| e^m\|^2\right)
  \leq\frac{1}{2}\| \Delta_+u^m \|_{\infty}\left(\sum\limits_{s=2}^Jh{(e_s^m)}^2+\| e^m\|^2 \right)\\
  \leq\| \Delta_+u^m \|_{\infty}\| e^m\|^2.
  \end{array}
\end{equation*}
  \vskip 0.2mm
 Similarly, it holds that
\begin{equation*}
  \begin{array}{ll}
  |\langle T_-e^m\Delta_-u^m,e^m\rangle|\leq \| \Delta_-u^m \|_{\infty}\| e^m\|^2.
  \end{array}
\end{equation*}

  \vskip 0.2mm
 Thus, we can get
\begin{equation*}
  \begin{split}
 & \Big|-\langle e^m\Delta u^m+\Delta (e^mU^m),e^m\rangle \Big| \\
 & = \left|\langle e^m\Delta u^m,e^m\rangle + \frac{1}{2}\langle T_+e^m\Delta_+u^m+T_-e^m\Delta_-u^m,e^m\rangle  \right|\\
 & \leq \|\Delta u^m \|_{\infty}\| e^m\|^2   + \frac{1}{2}\left(\| \Delta_+u^m \|_{\infty}+\| \Delta_-u^m \|_{\infty}\right)\| e^m\|^2\\
 & \leq  3c_0 h \| e^m\|^2,
  \end{split}
\end{equation*}
which finishes the proof.
 \end{proof}

   \vskip 0.2mm
  \begin{theorem}\label{th4.3} Let $\{u^n\}_{n=0}^{N}\in C_{x}^{4}([0,L])$ and $\{U^n\}_{n=0}^{N}$ be the solutions of \eqref{eq1.1}-\eqref{eq1.3} and \eqref{eq3.19}-\eqref{eq3.21}, respectively. Assume that $f^{n-1/2}$ is selected to meet that \eqref{eq2.7} and \eqref{eq2.8} hold. If the exact solution and the forcing term satisfy
  \begin{equation}\label{eq4.16}
  \begin{split}
      t\|u_{xx}'(\cdot,t)\| +  t^2\|u_{xx}''(\cdot,t)\| \leq \mathcal{M} t^{\sigma-1},
  \end{split}
  \end{equation}
  \begin{equation}\label{eq4.17}
  \begin{split}
       t\| f'(\cdot,t)\| +  t^2\| f''(\cdot,t)\| \leq \mathcal{M} t^{\sigma-1}, \quad \sigma>0,
  \end{split}
  \end{equation}
 respectively for $t>0$, then for $t_n\in [0,T]$ and $\gamma\geq 1$, it holds that
  \begin{equation*}
  \begin{split}
    \max\limits_{1\leq n \leq N}  \| U^n - u^n \| & \leq  C(T) \Big( h^2 + \mathcal{K}_{L} \Big),
  \end{split}
  \end{equation*}
from which,
      \begin{equation*}
     \begin{split}
       \mathcal{K}_{L} :=
        \begin{cases}
          k^{\gamma\sigma}, & \mbox{if } 1\leq \gamma < \frac{2}{\sigma}, \\
          k^{2}\log(t_N/t_1), & \mbox{if }\gamma = \frac{2}{\sigma}, \\
          k^2, & \mbox{if } \gamma > \frac{2}{\sigma}.
        \end{cases}
     \end{split}
     \end{equation*}
 \end{theorem}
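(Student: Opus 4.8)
The plan is to run the discrete energy method on the error equations \eqref{eq4.9}--\eqref{eq4.11}, exactly parallel to the stability proof of Theorem \ref{th4.2}, but now tracking the truncation terms $\sum_{m=1}^{4}(R_m)^{n-1/2}$ and absorbing the nonlinear error contributions via a discrete Gr\"onwall inequality. First I would take the inner product of \eqref{eq4.9} with $e^{1}$ and of \eqref{eq4.10} with $e^{n-1/2}$, multiply by $k_1$ and $k_n$ respectively, and sum over $n=1,\dots,N$. The left-hand telescoping term is handled exactly as in \eqref{eq4.6} using $\langle\delta_t e^{n-1/2},e^{n}\rangle\geq\frac{1}{2k_n}(\|e^{n}\|^2-\|e^{n-1}\|^2)$ together with $e^0=0$, giving a lower bound $\tfrac12\|e^{N}\|^2$. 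The fractional-integral term is moved to the right via Lemma \ref{lemma4.1} and is nonnegative by the positive-definiteness estimate \eqref{eq4.7} (i.e.\ \cite[(1.14)]{McLean}), so it can be dropped. This leaves, on the right, the three groups: the genuinely nonlinear term $\langle\nabla e^{n-1/2}\Delta_x e^{n-1/2},e^{n-1/2}\rangle$ coming from the convection discretization, the $(R_3)$ and $(R_4)$ terms, and the consistency terms $(R_1)+(R_2)$.

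Next I would dispose of the structural terms. By the identity in Lemma \ref{lemma4.3}(i) (written in the $\nabla,\Delta_x$ notation, i.e.\ $\langle\nabla e\,\Delta_x e,e\rangle = \tfrac{1}{6h}\langle e\Delta e+\Delta(e)^2,e\rangle$ — note this is not zero in general, but the combination appearing with $(R_3)$ is), the self-interaction nonlinear term combines with $(R_3)^{n-1/2}$; by Lemma \ref{lemma4.6} the pair $\langle(R_3)^{n-1/2},e^{n-1/2}\rangle=0$ (this is precisely why $(R_3)$ was split off in that form). For $(R_4)$, Lemma \ref{lemma4.7} gives $|\langle(R_4)^{n-1/2},e^{n-1/2}\rangle|\leq\frac{c_0}{2}\|e^{n-1/2}\|^2$, which is the term that will feed Gr\"onwall. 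For the consistency terms, Cauchy--Schwarz gives $k_n|\langle(R_1)^{n-1/2}+(R_2)^{n-1/2},e^{n-1/2}\rangle|\leq k_n(\|(R_1)^{n-1/2}\|+\|(R_2)^{n-1/2}\|)\|e^{n-1/2}\|$, and summing over $n$ and invoking Lemma \ref{lemma4.5} (whose hypotheses \eqref{eq4.12}--\eqref{eq4.13} are exactly \eqref{eq4.16}--\eqref{eq4.17}) bounds $\sum_n k_n(\|(R_1)\|+\|(R_2)\|)$ by $C(h^2+\mathcal{K}_L)$.

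Collecting everything, I would choose an index $M$ with $\|e^{M}\|=\max_{0\leq n\leq N}\|e^{n}\|$, note $\|e^{n-1/2}\|\leq\tfrac12(\|e^n\|+\|e^{n-1}\|)\leq\|e^{M}\|$, and arrive at an estimate of the shape
\begin{equation*}
\tfrac12\|e^{M}\|^2 \leq C(h^2+\mathcal{K}_L)\|e^{M}\| + \tfrac{c_0}{2}\sum_{n=1}^{N}k_n\|e^{n-1/2}\|^2 + \tfrac{c_0}{2}k_1\|e^1\|^2 ,
\end{equation*}
after which $\|e^{M}\|^2\leq C(h^2+\mathcal{K}_L)^2 + c_0\sum_{n=1}^{N}k_n\|e^{n}\|^2$ (up to harmless relabeling), and the discrete Gr\"onwall lemma with $\sum k_n\leq T$ yields $\max_{1\leq n\leq N}\|e^{n}\|\leq C(T)(h^2+\mathcal{K}_L)$. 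The main obstacle is the careful bookkeeping of the four $R$-terms at the first time level $n=1$, where the right-rectangle rule makes $(R_1)^{1/2}+(R_2)^{1/2}$ only $O(k_1)=O(k^\gamma)$ rather than $O(k^2)$ locally; one must verify that after summation against $k_1\geq\widetilde c_\gamma k^\gamma$ this single term still contributes only $O(k^{\gamma\sigma})$ (or better), which is exactly what the graded-mesh bookkeeping in \cite[Corollary 4.3]{McLean} underlying Lemma \ref{lemma4.5} delivers; the rest is a routine transcription of the stability argument.
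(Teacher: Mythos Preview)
Your proposal is correct and follows essentially the same energy-plus-Gr\"onwall route as the paper's proof. One small clarification: the cubic self-interaction term $\langle\nabla e\,\Delta_x e,e\rangle=\tfrac{1}{6h}\langle e\Delta e+\Delta e^2,e\rangle$ is \emph{already} zero on its own by Lemma~\ref{lemma4.3}(i) with $w=v=e$, and Lemma~\ref{lemma4.6} kills $\langle(R_3)^{n-1/2},e^{n-1/2}\rangle$ \emph{separately} (same identity with $w=u^{n-1/2}$, $v=e^{n-1/2}$)---no ``combination'' is needed; the paper also closes via the linear form \eqref{eq4.22}--\eqref{eq4.23} (dividing by $\|e^K\|$ and absorbing the top-index term into a factor $(1-\tfrac{c_0}{2}k_N)$ before Gr\"onwall) rather than your squared variant, but either works.
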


   \vskip 0.2mm
   \begin{proof} First, taking the inner product of \eqref{eq3.19}-\eqref{eq3.20} with $k_1e^{1}$ and $k_ne^{n-1/2}$, respectively, summing up for $n$ from 1 to $N$, and utilizing Lemmas \ref{lemma4.3} and \ref{lemma4.1}, we have
     \begin{equation}\label{eq4.18}
      \begin{split}
       k_1\langle \delta_te^{\frac{1}{2}} ,  e^{1}\rangle & +  \sum\limits_{n=2}^Nk_n \langle \delta_te^{n-\frac{1}{2}}, e^{n-\frac{1}{2}} \rangle
        \\
        &= k_1 \left\langle  \sum\limits_{m=1}^4 (R_m)^{\frac{1}{2}} ,  e^{1} \right\rangle + \sum\limits_{n=2}^N k_n \left\langle \sum\limits_{m=1}^4 (R_m)^{n-1/2}, e^{n-1/2}\right\rangle \\
        & - \left(  k_1\langle  I^{(\alpha)}\delta_xe^{\frac{1}{2}}, \delta_xe^{1} \rangle +  \sum\limits_{n=2}^N k_n \langle  I^{(\alpha)}\delta_xe^{n-\frac{1}{2}}, \delta_xe^{n-1/2} \rangle \right),
     \end{split}
    \end{equation}
from which, we use Lemma \ref{lemma4.6} and Lemma \ref{lemma4.7} to get
     \begin{equation}\label{eq4.19}
      \begin{split}
       k_1\langle \delta_te^{\frac{1}{2}} ,  e^{1}\rangle & +  \sum\limits_{n=2}^Nk_n \langle \delta_te^{n-\frac{1}{2}}, e^{n-\frac{1}{2}} \rangle
        \\
        &\leq  k_1 \left\langle  \sum\limits_{m=1}^2 (R_m)^{\frac{1}{2}} ,  e^{1} \right\rangle + \sum\limits_{n=2}^N k_n \left\langle \sum\limits_{m=1}^2 (R_m)^{n-1/2}, e^{n-1/2}\right\rangle \\
        & + \frac{c_0}{2}k_1 \|e^1\|^2 + \frac{c_0}{2}\sum\limits_{n=2}^N k_n \|e^{n-1/2}\|^2 \\
        & - \left(  k_1\langle  I^{(\alpha)}\delta_xe^{\frac{1}{2}}, \delta_xe^{1} \rangle +  \sum\limits_{n=2}^N k_n \langle  I^{(\alpha)}\delta_xe^{n-\frac{1}{2}}, \delta_xe^{n-1/2} \rangle \right).
     \end{split}
    \end{equation}

   \vskip 0.2mm
Then using \eqref{eq4.6}, \eqref{eq4.7} and the Cauchy-Schwarz inequality, we have
     \begin{equation}\label{eq4.20}
     \begin{split}
     	         \frac{1}{2} \Big(\|e^{N}\|^2-\|e^{0}\|^2 \Big) & \leq k_1 \|(R_1)^{\frac{1}{2}}\| \|e^1\| + \sum\limits_{n=2}^N k_n \|(R_1)^{n-\frac{1}{2}}\| \|e^{n-\frac{1}{2}}\| \\
     & + k_1  \|(R_2)^{\frac{1}{2}}\| \|e^1\| + \sum\limits_{n=2}^N k_n\|(R_2)^{n-\frac{1}{2}}\|  \|e^{n-\frac{1}{2}}\| \\
     & + \frac{c_0}{2}k_1 \|e^1\|^2 + \frac{c_0}{2}\sum\limits_{n=2}^N k_n \left\|e^{n-\frac{1}{2}}\right\|^2.
     \end{split}
     \end{equation}

   \vskip 0.2mm
   By taking an appropriate $K$ such that $\|e^K\|=\max\limits_{0\leq n\leq N}{\|e^n\|}$ and using \eqref{eq4.11}, we have
     \begin{equation}\label{eq4.21}
     \begin{split}
     	       \|e^{K}\|^2 & \leq  2\sum\limits_{n=1}^K k_n \|(R_1)^{n-\frac{1}{2}}\| \|e^{K}\| + 2\sum\limits_{n=1}^K k_n \|(R_2)^{n-\frac{1}{2}}\| \|e^{K}\| \\
     & + c_0k_1 \|e^1\|\|e^K\| + c_0 \sum\limits_{n=2}^K k_n \left\|e^{n-\frac{1}{2}}\right\|\|e^K\|,
     \end{split}
     \end{equation}
   therefore,
     \begin{equation}\label{eq4.22}
     \begin{split}
     	       \|e^{K}\| & \leq  2\sum\limits_{n=1}^K k_n \|(R_1)^{n-\frac{1}{2}}\| + 2 \sum\limits_{n=1}^K k_n\|(R_2)^{n-\frac{1}{2}}\|   \\
     & + c_0k_1 \|e^1\| + \frac{c_0}{2} \sum\limits_{n=2}^K k_n \left(\|e^{n-1}\|+ \|e^{n}\|\right).
     \end{split}
     \end{equation}

This naturally can obtain
   \begin{equation}\label{eq4.23}
     \begin{split}
     	   \left( 1- \frac{c_0}{2}k_N \right)    \|e^{N}\| & \leq 2 \sum\limits_{n=1}^N k_n \|(R_1)^{n-\frac{1}{2}}\|  + 2 \sum\limits_{n=1}^N k_n \|(R_2)^{n-\frac{1}{2}}\| \\
     &  + \frac{3c_0}{2} \sum\limits_{n=1}^{N-1} k_n \|e^{n}\|.
     \end{split}
     \end{equation}

   \vskip 0.2mm
   When $k_N\leq \frac{1}{c_0}$, we utilize the discrete Gr\"{o}nwall inequality to yield
   \begin{equation}\label{eq4.24}
     \begin{split}
     	    \left\|e^{N}\right\|    & \leq  C\exp\left(\sum\limits_{n=1}^{N} k_n \right) \left(2 \sum\limits_{n=1}^N k_n \|(R_1)^{n-\frac{1}{2}}\|  + 2 \sum\limits_{n=1}^N k_n \|(R_2)^{n-\frac{1}{2}}\| \right),
     \end{split}
     \end{equation}
from which, we can complete the proof by Lemma \ref{lemma4.5}.
 \end{proof}

%%%%%%%%%%%%%%%%%%%%%%%%%%%%%%%%%%%%%%%%%%%%%%%%%%%%%%%%%%%%%%%%%%%%%%%%%%%%%%%%%%%%%%%%%%%%%%
 \subsection{Uniqueness of numerical solutions}
   \vskip 0.2mm
    Next, based on the hypothesis of Theorem \ref{th4.3}, we will prove the uniqueness of numerical solutions for second-order implicit difference scheme \eqref{eq3.19}-\eqref{eq3.22}.

   \vskip 0.2mm
   \begin{theorem}\label{th4.4} If $h$ is sufficiently small, $k=o(h^{\frac{1}{2}})$ and $\gamma = \frac{2}{\sigma}$, then second-order implicit difference scheme \eqref{eq3.19}-\eqref{eq3.22} has a unique solution.
   \end{theorem}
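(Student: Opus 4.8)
The plan is to read off uniqueness from the convergence bound of Theorem~\ref{th4.3} together with the discrete energy machinery already assembled. Suppose $\{U^n\}_{n=1}^N$ and $\{V^n\}_{n=1}^N$ both solve \eqref{eq3.19}--\eqref{eq3.22}, and put $\eta^n:=U^n-V^n\in\mathbf{W}_h$, so $\eta^0=0$. Using the algebraic identity $\nabla W_j\Delta_xW_j=\tfrac1{6h}\big(W_j\Delta W_j+\Delta(W^2)_j\big)$ (the form already exploited in the proof of Theorem~\ref{th4.1}), together with $U\Delta U-V\Delta V=U\Delta\eta+\eta\Delta V$ and $\Delta(U^2)-\Delta(V^2)=\Delta(\eta(U+V))$, subtracting the two copies of \eqref{eq3.20} yields, for $2\le n\le N$,
\[
\delta_t\eta^{n-1/2}+\frac1{6h}\Big(U^{n-1/2}\Delta\eta^{n-1/2}+\eta^{n-1/2}\Delta V^{n-1/2}+\Delta\big(\eta^{n-1/2}(U^{n-1/2}+V^{n-1/2})\big)\Big)=I^{(\alpha)}\delta_x^2\eta^{n-1/2},
\]
with \emph{no} truncation remainder on the right, and an entirely analogous identity from \eqref{eq3.19} at $n=1$. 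The goal is then to show $\eta^n\equiv 0$ for all $n$ by induction.

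For the inductive step (the base $n=1$ is handled the same way, with $\eta^1$ in place of $\eta^{n-1/2}$ and $I^{(\alpha)}\delta_x^2\eta^{1/2}=\tfrac{k_1^{\alpha}}{\Gamma(\alpha+2)}\delta_x^2\eta^1$), assume $\eta^s=0$ for all $s\le n-1$. Then $\eta^{n-1/2}=\tfrac12\eta^n$ and $\delta_t\eta^{n-1/2}=\tfrac1{k_n}\eta^n$, and by \eqref{eq2.2} and \eqref{eq2.5} the product--integration sum collapses to the single term $I^{(\alpha)}\delta_x^2\eta^{n-1/2}=\tfrac{k_n^{\alpha}}{2\Gamma(\alpha+2)}\delta_x^2\eta^n$. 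Taking the inner product of the displayed identity with $\eta^{n-1/2}$, applying Lemma~\ref{lemma4.1} so that $\langle\delta_x^2\eta^n,\eta^n\rangle\le 0$, and using Lemma~\ref{lemma4.4} with $v=U^{n-1/2}$, $w=V^{n-1/2}$ to reduce the convection term to $\tfrac1{6h}\big\langle\eta^{n-1/2}\Delta V^{n-1/2}+\Delta(V^{n-1/2}\eta^{n-1/2}),\eta^{n-1/2}\big\rangle$, one obtains
\[
\frac1{2k_n}\|\eta^n\|^2\le\frac1{6h}\Big|\big\langle\eta^{n-1/2}\Delta V^{n-1/2}+\Delta(V^{n-1/2}\eta^{n-1/2}),\,\eta^{n-1/2}\big\rangle\Big|.
\]

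Now the hypotheses enter. Writing $V^{n-1/2}=u^{n-1/2}+\big(V^{n-1/2}-u^{n-1/2}\big)$, bound the $u$-part exactly as in the proof of Lemma~\ref{lemma4.7} (using $\|\Delta_\pm u^{m}\|_\infty\le c_0h$) and the remaining part by the crude estimate $|\langle w\Delta g+\Delta(wg),w\rangle|\le 4\|g\|_\infty\|w\|^2$; this makes the right-hand side at most $\big(\tfrac{c_0}{2}+\tfrac{C}{h}\|V^{n-1/2}-u^{n-1/2}\|_\infty\big)\|\eta^{n-1/2}\|^2=\big(\tfrac{c_0}{2}+\tfrac{C}{h}\|V^{n-1/2}-u^{n-1/2}\|_\infty\big)\tfrac14\|\eta^n\|^2$. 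Since Theorem~\ref{th4.3} applies verbatim to $\{V^n\}$, we have $\|V^m-u^m\|\le C(T)(h^2+\mathcal{K}_L)$, so the inverse inequality $\|w\|_\infty\le h^{-1/2}\|w\|$ and the choice $\gamma=\tfrac2\sigma$ (which forces $\mathcal{K}_L=k^2\log(t_N/t_1)$) give $\tfrac{C}{h}\|V^{n-1/2}-u^{n-1/2}\|_\infty\le C\big(h^{1/2}+h^{-3/2}k^2\log(t_N/t_1)\big)=:\mathcal{E}_h$. Hence $\big(\tfrac1{2k_n}-\tfrac14(\tfrac{c_0}{2}+\mathcal{E}_h)\big)\|\eta^n\|^2\le 0$; using $k_n\le\widetilde{C}_\gamma k$ and $\log(t_N/t_1)=\mathcal{O}(\log(1/k))$, the quantity $k_n(\tfrac{c_0}{2}+\mathcal{E}_h)\le C\big(c_0k+kh^{1/2}+(k/\sqrt{h})^3\log(1/k)\big)$ tends to $0$ as $h\to 0$ precisely because $k=o(h^{1/2})$. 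Thus for $h$ small enough this coefficient is below $2$, which forces $\eta^n=0$ and closes the induction; therefore $U^n=V^n$ for $1\le n\le N$.

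The delicate point is this last step: after paying the $h^{-1/2}$ factor of the inverse inequality to turn the $L^2$-convergence rate into an $L^\infty$ bound on $V^{n-1/2}-u^{n-1/2}$, one must verify that the residual coefficient multiplied by the step size still stays below the threshold, and this is exactly where the combined condition ``$\gamma=2/\sigma$ and $k=o(h^{1/2})$'' is needed. The other ingredients---the collapse of the $I^{(\alpha)}$-sum under the induction hypothesis, the nonpositivity of the diffusion contribution via Lemma~\ref{lemma4.1}, and the cancellation in the convection term via Lemma~\ref{lemma4.4}---are routine consequences of the lemmas already established.
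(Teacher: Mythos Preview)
Your proof is correct and follows essentially the same approach as the paper's: induction on $n$, collapse of the product--integration sum under the inductive hypothesis, nonpositivity of the diffusion contribution via Lemma~\ref{lemma4.1}, reduction of the convection term via Lemma~\ref{lemma4.4}, and then the inverse inequality together with Theorem~\ref{th4.3} (applied to the second numerical solution $V$) to force the coefficient in front of $\|\eta^n\|^2$ below the threshold under $k=o(h^{1/2})$. The only cosmetic difference is that the paper bounds $\|\Delta V^{n-1/2}\|_\infty$ and $\|\Delta_\pm V^{n-1/2}\|_\infty$ directly by inserting the exact solution via the triangle inequality, whereas you split $V^{n-1/2}=u^{n-1/2}+(V^{n-1/2}-u^{n-1/2})$ at the level of the bilinear form; the resulting estimates and the final smallness condition coincide.
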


  \vskip 0.2mm
  \begin{proof} Let $U^{n}\in \mathbb{R}^{J-1}$ and $V^{n}\in \mathbb{R}^{J-1}$, $ 0\leq n\leq N$, be the solutions of \eqref{eq3.19}-\eqref{eq3.22} which satisfy $U^{0}=V^{0}$. From \cite[p.~29]{Lopez-Marcos}, we can demonstrate similarly that \eqref{eq3.19} has a unique solution $U^1$. Then we suppose that $U^{m}=V^{m}$, $1\leq m\leq n-1$, $2\leq n \leq N$.

   \vskip 0.2mm
   Next, we only need to show $U^{n}=V^{n}$ for \eqref{eq3.20}, with $2\leq n \leq N$. Firstly, by analyzing \eqref{eq3.20}, we get
   \begin{equation}\label{eq4.25}
    \begin{split}
   \left(\delta_tU_j^{n-\frac{1}{2}}- \delta_tV_j^{n-\frac{1}{2}} \right) & + \frac{1}{6h}\left(U_j^{n-\frac{1}{2}}\Delta U_j^{n-\frac{1}{2}}+\Delta (U_j^{n-\frac{1}{2}})^2 - V_j^{n-\frac{1}{2}}\Delta V_j^{n-\frac{1}{2}} - \Delta (V_j^{n-\frac{1}{2}})^2 \right)  \\ & = I^{(\alpha)}\delta_x^2\left(U_j^{n-\frac{1}{2}}-V_j^{n-\frac{1}{2}}\right),  \qquad 1\leq j\leq J-1.
    \end{split}
   \end{equation}

   \vskip 0.2mm
   Then, taking the inner product of \eqref{eq4.25} with $U^{n-\frac{1}{2}}-V^{n-\frac{1}{2}}$ and employing Lemma \ref{lemma4.1}, we yield
   \begin{equation*}
    \begin{split}
        &\frac{1}{2k_n}( \|U^n - V^n\|^2 - \|U^{n-1} - V^{n-1}\|^2 )  \\
        &\leq - \frac{1}{6h} \left\langle U^{n-\frac{1}{2}}\Delta U^{n-\frac{1}{2}}+\Delta (U^{n-\frac{1}{2}})^2 - V^{n-\frac{1}{2}}\Delta V^{n-\frac{1}{2}} - \Delta (V^{n-\frac{1}{2}})^2, U^{n-\frac{1}{2}}-V^{n-\frac{1}{2}} \right\rangle   \\
         & =  - \frac{1}{6h} \Big\langle U^{n-\frac{1}{2}}\Delta(U^{n-\frac{1}{2}}-V^{n-\frac{1}{2}}) + (U^{n-\frac{1}{2}}-V^{n-\frac{1}{2}})\Delta V^{n-\frac{1}{2}} \\
         & \qquad\qquad\qquad + \Delta(U^{n-\frac{1}{2}}-V^{n-\frac{1}{2}})(U^{n-\frac{1}{2}}+V^{n-\frac{1}{2}})   , U^{n-\frac{1}{2}} - V^{n-\frac{1}{2}} \Big\rangle,
    \end{split}
   \end{equation*}
   and using Lemma \ref{lemma4.4}, we obtain
   \begin{equation}\label{eq4.26}
    \begin{split}
     & \|U^n - V^n\|^2  \leq \|U^{n-1} - V^{n-1}\|^2   \\
       &  - \frac{k_n}{3h} \left\langle  (U^{n-\frac{1}{2}}-V^{n-\frac{1}{2}})\Delta V^{n-\frac{1}{2}} + \Delta(V^{n-\frac{1}{2}}(U^{n-\frac{1}{2}}-V^{n-\frac{1}{2}}))   , U^{n-\frac{1}{2}} - V^{n-\frac{1}{2}} \right\rangle  \\
    & \leq \frac{k_n}{3h} \left|\left\langle  (U^{n-\frac{1}{2}}-V^{n-\frac{1}{2}})\Delta V^{n-\frac{1}{2}} + \Delta(V^{n-\frac{1}{2}}(U^{n-\frac{1}{2}}-V^{n-\frac{1}{2}}))   , U^{n-\frac{1}{2}} - V^{n-\frac{1}{2}} \right\rangle\right|.
    \end{split}
   \end{equation}

   \vskip 0.2mm
  Using the Cauchy-Schwarz inequality, then we get
   \begin{equation}\label{eq4.27}
      \begin{split}
     \Theta^n& \equiv \left|\left\langle  (U^{n-\frac{1}{2}}-V^{n-\frac{1}{2}})\Delta V^{n-\frac{1}{2}} + \Delta(V^{n-\frac{1}{2}}(U^{n-\frac{1}{2}}-V^{n-\frac{1}{2}}))   , U^{n-\frac{1}{2}} - V^{n-\frac{1}{2}} \right\rangle\right|   \\
    &\leq  \left\|\Delta V^{n-\frac{1}{2}}\right\|_{\infty} \|U^{n-\frac{1}{2}} - V^{n-\frac{1}{2}}\|^2  +  \frac{1}{2}\left( \|\Delta_{+} V^{n-\frac{1}{2}}\|_{\infty}  + \|\Delta_{-} V^{n-\frac{1}{2}}\|_{\infty} \right)\|U^{n-\frac{1}{2}} - V^{n-\frac{1}{2}}\|^2,
    \end{split}
   \end{equation}
   from which, applying the triangle inequality, we obtain
   \begin{equation*}
     \begin{split}
           \left\|\Delta V^{n-\frac{1}{2}}\right\|_{\infty} &= \max \limits_{1\leq j \leq J-1}\left\{  \left|V^{n-\frac{1}{2}}_{j+1} - V^{n-\frac{1}{2}}_{j-1}\right| \right\}  \\
           &\leq  \max \limits_{1\leq j \leq J-1}\left\{  \left|V^{n-\frac{1}{2}}_{j+1} - v^{n-\frac{1}{2}}_{j+1}\right| + \left|v^{n-\frac{1}{2}}_{j+1} - v^{n-\frac{1}{2}}_{j-1}\right| + \left|v^{n-\frac{1}{2}}_{j-1} - V^{n-\frac{1}{2}}_{j-1}\right| \right\}  \\
          & \leq  2 \left\|V^{n-\frac{1}{2}} - v^{n-\frac{1}{2}}\right\|_{\infty} + Ch  \leq  2 h^{-\frac{1}{2}}\left\|V^{n-\frac{1}{2}} - v^{n-\frac{1}{2}}\right\| +  Ch  \\
          & \leq h^{-\frac{1}{2}}\left\|V^{n} - v^{n}\right\| +  Ch,
    \end{split}
   \end{equation*}
  hence, we employ Theorem \ref{th4.3} with $\gamma = \frac{2}{\sigma}$, then
   \begin{equation}\label{eq4.28}
     \begin{split}
          \Theta^n \leq  C \Big[ h^{-\frac{1}{2}}(k^2+h^2) +  Ch \Big] \|U^n - V^n\|^2.
    \end{split}
   \end{equation}

    \vskip 0.2mm
    By substituting \eqref{eq4.28} into \eqref{eq4.26}, we can get
    \begin{equation}\label{eq4.29}
    \begin{array}{ll}
             \|U^n - V^n\|^2  \leq  C k/h \Big[ h^{-\frac{1}{2}}(k^2+h^2) +  Ch \Big] \|U^n - V^n\|^2, \qquad 2\leq n \leq N.
    \end{array}
    \end{equation}

    \vskip 0.2mm
    With \eqref{eq4.29}, we obtain $\|U^n - V^n\|^2=0$ for $k=o(h^{\frac{1}{2}})$ as $h \rightarrow 0$. Within this assumption condition, the proof is finished.
    \end{proof}

\section{Numerical experiment}

  \vskip 0.2mm
  In this section, we set $L=T=1$. We use the second-order implicit difference scheme \eqref{eq3.19}-\eqref{eq3.22} to solve problem \eqref{eq1.1}-\eqref{eq1.3}, based on an iterative algorithm \cite[Algorithm 1]{Qiu1}, with $MaxStep=300$ and $eps=1$e-6. All experiments are carried out by MATLAB (R2014b) on a Windows 10 with CPU (2.20 GHz), RAM (8.0 GB).

  \vskip 0.2mm
  Then define the following $L^2$ errors
   $$\left.
 \begin{array}{ll}
    E_{CN}(N,J) := \left\|U^N - u^N \right\|
 \end{array}
 \right.
 $$
 and the convergence orders
 $$\left.
 \begin{array}{ll}
  Rate^t :=\log_{2}\left(\frac{E_{CN}(N,J)}{E_{CN}(2N,J)}\right), \quad
  Rate^x :=\log_{2}\left(\frac{E_{CN}(N,J)}{E_{CN}(N,2J)}\right),
 \end{array}
 \right.
 $$
 respectively. In addition, we define
   \begin{equation}\label{eq5.1}
     \begin{split}
          f^{n-\frac{1}{2}} := \frac{1}{k_n} \int_{t_{n-1}}^{t_n} f(x,t)dt
     \end{split}
     \end{equation}
to replace $f^{n-1/2} =f(t_{n-1/2})$.

\vskip 1mm
  \textbf{Example 1.} First, we give the exact solution
   \begin{equation}\label{eq5.2}
     \begin{split}
          u(x,t) = \sin \pi x  - \frac{t^{\alpha+1}}{\Gamma(\alpha+2)} \sin 2\pi x,
     \end{split}
     \end{equation}
 hence, $u_0(x)=\sin \pi x $ and
   \begin{equation}\label{eq5.3}
     \begin{split}
          f(x,t) &= \frac{\pi^2t^{\alpha}}{\Gamma(\alpha+1)}  \sin \pi x   - \left( \frac{4\pi^2t^{2\alpha+1}}{\Gamma(2\alpha+2)}  + \frac{t^{\alpha}}{\Gamma(\alpha+1)}   \right)\sin 2\pi x \\
          & + \left(  \sin \pi x  - \frac{t^{\alpha+1}}{\Gamma(\alpha+2)} \sin 2\pi x \right)  \left( \pi \cos \pi x  - \frac{2\pi t^{\alpha+1}}{\Gamma(\alpha+2)} \cos 2\pi x\right).
     \end{split}
     \end{equation}

\vskip 0.2mm
Therefore, for $t>0$, we have \cite{McLean}
   \begin{equation}\label{eq5.4}
     \begin{split}
          t \|u_{xx}'(\cdot, t)\| + t^2\|u_{xx}''(\cdot, t)\|\leq  \mathcal{M}t^{\alpha+1},  \quad t \|f'(\cdot, t)\| + t^2\|f''(\cdot, t)\|\leq  \mathcal{M}t^{\alpha},
     \end{split}
     \end{equation}
which implies that the regularity conditions \eqref{eq4.16}-\eqref{eq4.17} of Theorem \ref{th4.3} are satisfied with $\sigma=\alpha+1$. In Table \ref{tb1}, we show some results when taking $f^{n-\frac{1}{2}}=\frac{1}{2}(f^{n}+f^{n-1})$. As predicted, the temporal convergence orders approximate
$k^{1+\alpha}$ under a uniform mesh, i.e., $\gamma=1$. However, from Table \ref{tb2}, the convergence orders can improve to $k^{2}$ when $\gamma=\frac{2}{\alpha+1}$ (cf.~Theorem \ref{th4.3}).

\vskip 0.2mm
Then, from Table \ref{tb3}, we illustrate the effectiveness of eliminating the errors caused by the approximation \eqref{eq2.6}, employing \eqref{eq5.1} to replace $f^{n-\frac{1}{2}}=f(\cdot,t_{n-\frac{1}{2}})$, which means that the singular behaviour of $f$ on longer plays an important role, and Theorem \ref{th4.3} holds with $\sigma=\alpha+2$ (namely $\gamma\geq \frac{2}{\sigma}$), thus we predict the second-order convergence for time with $\gamma=1$. In addition, the second-order accuracy for time can be observed in Table \ref{tb4}, by fixing $J=1024$ and $\gamma=\frac{2}{\alpha+1}$ when $f^{n-1/2}$ is given by \eqref{eq5.1}.

\vskip 0.2mm
When $f^{n-\frac{1}{2}}$ is given by \eqref{eq5.1} and fixing $\alpha=0.05,0.35,0.65,0.95$, respectively, it can be observed clearly from Tables \ref{tb5} and \ref{tb6} that the implicit difference scheme is convergent to the order 2 for space as expected, which is in accordance with our theoretical analysis.

 \begin{table}
   \centering\small
   \caption{The $L^2$ errors, temporal convergence orders and CPU time (seconds) for $J=1024$
   and a uniform mesh ($\gamma=1$) with $f^{n-1/2}=\frac{1}{2}(f^{n}+f^{n-1})$.}\label{tb1}
 \begin{tabular}{c c c c c c c}\hline
  $\alpha$  &\qquad $ N $  &\qquad  $E_{CN}$    &\qquad  $Rate^t$   &\qquad  CPU(s)   \\ \hline
                  &\qquad $ 8$    &\qquad  2.0116e-3  &\qquad    *        &\qquad  4.61      \\
                  &\qquad $ 16$   &\qquad  9.6258e-4  &\qquad    1.06     &\qquad  8.20     \\
   $0.05$         &\qquad $ 32$   &\qquad  4.6360e-4  &\qquad    1.05     &\qquad  15.41     \\
                  &\qquad $ 64$   &\qquad  2.2437e-4  &\qquad    1.05     &\qquad  31.90    \\
                  &\qquad $ 8$    &\qquad  5.2489e-3  &\qquad    *        &\qquad  4.66      \\
                  &\qquad $ 16$   &\qquad  2.1570e-3  &\qquad    1.28     &\qquad  8.85     \\
   $0.25$         &\qquad $ 32$   &\qquad  9.0520e-4  &\qquad    1.25     &\qquad  16.88     \\
                  &\qquad $ 64$   &\qquad  3.8131e-4  &\qquad    1.25     &\qquad  33.63    \\
                  &\qquad $ 8$    &\qquad  7.8050e-3  &\qquad    *        &\qquad  5.27      \\
                  &\qquad $ 16$   &\qquad  2.5050e-3  &\qquad    1.64     &\qquad  10.09     \\
   $0.60$         &\qquad $ 32$   &\qquad  8.3659e-4  &\qquad    1.58     &\qquad  18.13     \\
                  &\qquad $ 64$   &\qquad  2.8073e-4  &\qquad    1.58     &\qquad  33.90    \\
                  &\qquad $ 8$    &\qquad  1.8670e-3  &\qquad    *        &\qquad  6.34      \\
                  &\qquad $ 16$   &\qquad  4.1218e-4  &\qquad    2.18     &\qquad  10.68     \\
   $0.95$         &\qquad $ 32$   &\qquad  1.0774e-4  &\qquad    1.94     &\qquad  17.76     \\
                  &\qquad $ 64$   &\qquad  3.0643e-5  &\qquad    1.81     &\qquad  31.09    \\
     \hline
   \end{tabular}
 \end{table}

  \begin{table}
   \centering\small
   \caption{The $L^2$ errors, temporal convergence orders and CPU time (seconds) for $J=1024$
   and a graded mesh ($\gamma=\frac{2}{\alpha+1}$) when $f^{n-1/2}=\frac{1}{2}(f^{n}+f^{n-1})$.}\label{tb2}
 \begin{tabular}{c c c c c c c}\hline
  $\alpha$  &\qquad $ N $  &\qquad  $E_{CN}$    &\qquad  $Rate^t$   &\qquad  CPU(s)   \\ \hline
                  &\qquad $ 8$    &\qquad  4.8808e-4  &\qquad    *        &\qquad  4.21      \\
                  &\qquad $ 16$   &\qquad  1.4323e-4  &\qquad    1.77     &\qquad  7.71     \\
   $0.05$         &\qquad $ 32$   &\qquad  4.0144e-5  &\qquad    1.84     &\qquad  14.74     \\
                  &\qquad $ 64$   &\qquad  1.0342e-5  &\qquad    1.96     &\qquad  32.05    \\
                  &\qquad $ 8$    &\qquad  1.6004e-3  &\qquad    *        &\qquad  4.55      \\
                  &\qquad $ 16$   &\qquad  4.4928e-4  &\qquad    1.83     &\qquad  8.30     \\
   $0.25$         &\qquad $ 32$   &\qquad  1.2346e-4  &\qquad    1.86     &\qquad  16.16     \\
                  &\qquad $ 64$   &\qquad  3.3192e-5  &\qquad    1.90     &\qquad  33.07    \\
                  &\qquad $ 8$    &\qquad  4.3516e-3  &\qquad    *        &\qquad  5.53      \\
                  &\qquad $ 16$   &\qquad  1.1749e-3  &\qquad    1.89     &\qquad  9.79     \\
   $0.60$         &\qquad $ 32$   &\qquad  3.2463e-4  &\qquad    1.86     &\qquad  17.70     \\
                  &\qquad $ 64$   &\qquad  8.9770e-5  &\qquad    1.85     &\qquad  32.28    \\
                  &\qquad $ 8$    &\qquad  1.6900e-3  &\qquad    *        &\qquad  8.32      \\
                  &\qquad $ 16$   &\qquad  3.6973e-4  &\qquad    2.19     &\qquad  10.29     \\
   $0.95$         &\qquad $ 32$   &\qquad  9.6631e-5  &\qquad    1.94     &\qquad  16.67     \\
                  &\qquad $ 64$   &\qquad  2.7185e-5  &\qquad    1.83     &\qquad  32.56    \\
     \hline
   \end{tabular}
 \end{table}

 \begin{table}
   \centering\small
   \caption{The $L^2$ errors, temporal convergence orders and CPU time (seconds) for $J=1024$ and a uniform mesh ($\gamma=1$)
   when $f^{n-1/2}$ is presented by \eqref{eq5.1}.}\label{tb3}
 \begin{tabular}{c c c c c c c}\hline
  $\alpha$  &\qquad $ N $  &\qquad  $E_{CN}$    &\qquad  $Rate^t$   &\qquad  CPU(s)   \\ \hline
                  &\qquad $ 8$    &\qquad  4.9645e-4  &\qquad    *        &\qquad  4.38      \\
                  &\qquad $ 16$   &\qquad  1.2346e-4  &\qquad    2.01     &\qquad  10.50     \\
   $0.25$         &\qquad $ 32$   &\qquad  2.9472e-5  &\qquad    2.07     &\qquad  17.53     \\
                  &\qquad $ 64$   &\qquad  6.0076e-6  &\qquad    2.29     &\qquad  37.04    \\
                  &\qquad $ 8$    &\qquad  1.1946e-3  &\qquad    *        &\qquad  5.03      \\
                  &\qquad $ 16$   &\qquad  2.5727e-4  &\qquad    2.21     &\qquad  9.16     \\
   $0.50$         &\qquad $ 32$   &\qquad  6.3102e-5  &\qquad    2.03     &\qquad  16.63     \\
                  &\qquad $ 64$   &\qquad  1.4618e-5  &\qquad    2.11     &\qquad  34.55    \\
                  &\qquad $ 8$    &\qquad  1.9949e-3  &\qquad    *        &\qquad  5.83      \\
                  &\qquad $ 16$   &\qquad  4.9271e-4  &\qquad    2.02     &\qquad  10.19     \\
   $0.75$         &\qquad $ 32$   &\qquad  1.1761e-4  &\qquad    2.07     &\qquad  16.74     \\
                  &\qquad $ 64$   &\qquad  2.7211e-5  &\qquad    2.11     &\qquad  31.59    \\
                  &\qquad $ 8$    &\qquad  1.8929e-3  &\qquad    *        &\qquad  6.39      \\
                  &\qquad $ 16$   &\qquad  3.6419e-4  &\qquad    2.38     &\qquad  10.77     \\
   $0.95$         &\qquad $ 32$   &\qquad  8.1789e-5  &\qquad    2.15     &\qquad  17.34     \\
                  &\qquad $ 64$   &\qquad  1.9272e-5  &\qquad    2.08     &\qquad  31.79    \\
     \hline
   \end{tabular}
 \end{table}

  \begin{table}
   \centering\small
   \caption{The $L^2$ errors, temporal convergence orders and CPU time (seconds) for $J=1024$ and a graded mesh ($\gamma=\frac{2}{\alpha+1}$)
   when $f^{n-1/2}$ is presented by \eqref{eq5.1}.}\label{tb4}
 \begin{tabular}{c c c c c c c}\hline
  $\alpha$  &\qquad $ N $  &\qquad  $E_{CN}$    &\qquad  $Rate^t$   &\qquad  CPU(s)   \\ \hline
                  &\qquad $ 8$    &\qquad  2.4027e-3  &\qquad    *        &\qquad  4.36      \\
                  &\qquad $ 16$   &\qquad  6.3364e-4  &\qquad    1.92     &\qquad  8.38     \\
   $0.25$         &\qquad $ 32$   &\qquad  1.6421e-4  &\qquad    1.95     &\qquad  15.87     \\
                  &\qquad $ 64$   &\qquad  4.1219e-5  &\qquad    1.99     &\qquad  31.94    \\
                  &\qquad $ 8$    &\qquad  2.7254e-3  &\qquad    *        &\qquad  4.81      \\
                  &\qquad $ 16$   &\qquad  6.7966e-4  &\qquad    2.00     &\qquad  8.89     \\
   $0.50$         &\qquad $ 32$   &\qquad  1.7026e-4  &\qquad    2.00     &\qquad  16.37     \\
                  &\qquad $ 64$   &\qquad  4.1702e-5  &\qquad    2.03     &\qquad  32.92    \\
                  &\qquad $ 8$    &\qquad  2.3405e-3  &\qquad    *        &\qquad  5.48      \\
                  &\qquad $ 16$   &\qquad  5.9713e-4  &\qquad    1.97     &\qquad  9.69     \\
   $0.75$         &\qquad $ 32$   &\qquad  1.4693e-4  &\qquad    2.02     &\qquad  18.30     \\
                  &\qquad $ 64$   &\qquad  3.5482e-5  &\qquad    2.05     &\qquad  34.10    \\
                  &\qquad $ 8$    &\qquad  1.8367e-3  &\qquad    *        &\qquad  6.20      \\
                  &\qquad $ 16$   &\qquad  3.7228e-4  &\qquad    2.30     &\qquad  11.53     \\
   $0.95$         &\qquad $ 32$   &\qquad  8.8065e-5  &\qquad    2.08     &\qquad  16.67     \\
                  &\qquad $ 64$   &\qquad  2.1234e-5  &\qquad    2.05     &\qquad  30.99    \\
     \hline
   \end{tabular}
 \end{table}

 \begin{table}
   \centering\small
   \caption{The $L^2$ errors, spatial convergence orders and CPU time (seconds) for $N=256$ and a graded mesh ($\gamma=\frac{2}{\alpha+1}$)
   when $f^{n-1/2}=\frac{1}{2}(f^{n}+f^{n-1})$.}\label{tb5}
 \begin{tabular}{c c c c c c c}\hline
  $\alpha$  &\qquad $ J   $  &\qquad  $E_{CN}$    &\qquad  $Rate^x$   &\qquad  CPU(s)   \\ \hline
                  &\qquad $ 8$    &\qquad  3.7100e-2  &\qquad    *        &\qquad  0.31      \\
                  &\qquad $ 16$   &\qquad  9.0767e-3  &\qquad    2.03     &\qquad  0.32    \\
   $0.05$         &\qquad $ 32$   &\qquad  2.2566e-3  &\qquad    2.01     &\qquad  0.35     \\
                  &\qquad $ 64$   &\qquad  5.6294e-4  &\qquad    2.00     &\qquad  0.50    \\
                  &\qquad $ 8$    &\qquad  3.2425e-2  &\qquad    *        &\qquad  0.32      \\
                  &\qquad $ 16$   &\qquad  7.9333e-3  &\qquad    2.03     &\qquad  0.36    \\
   $0.35$         &\qquad $ 32$   &\qquad  1.9727e-3  &\qquad    2.01     &\qquad  0.38     \\
                  &\qquad $ 64$   &\qquad  4.9245e-4  &\qquad    2.00     &\qquad  0.54    \\
                  &\qquad $ 8$    &\qquad  2.7412e-2  &\qquad    *        &\qquad  0.32      \\
                  &\qquad $ 16$   &\qquad  6.7176e-3  &\qquad    2.03     &\qquad  0.35    \\
   $0.65$         &\qquad $ 32$   &\qquad  1.6726e-3  &\qquad    2.01     &\qquad  0.38     \\
                  &\qquad $ 64$   &\qquad  4.1903e-4  &\qquad    2.00     &\qquad  0.56    \\
                  &\qquad $ 8$    &\qquad  2.7276e-2  &\qquad    *        &\qquad  0.32      \\
                  &\qquad $ 16$   &\qquad  6.5935e-3  &\qquad    2.05     &\qquad  0.33    \\
   $0.95$         &\qquad $ 32$   &\qquad  1.6360e-3  &\qquad    2.01     &\qquad  0.38     \\
                  &\qquad $ 64$   &\qquad  4.0899e-4  &\qquad    2.00     &\qquad  0.57    \\
     \hline
   \end{tabular}
 \end{table}

 \begin{table}
   \centering\small
   \caption{The $L^2$ errors, spatial convergence orders and CPU time (seconds) for $N=256$ and a graded mesh ($\gamma=\frac{2}{\alpha+1}$)
    when $f^{n-1/2}$ is presented by \eqref{eq5.1}.}\label{tb6}
 \begin{tabular}{c c c c c c c}\hline
  $\alpha$  &\qquad $ J $  &\qquad  $E_{CN}$    &\qquad  $Rate^x$   &\qquad  CPU(s)   \\ \hline
                  &\qquad $ 8$    &\qquad  3.7100e-2  &\qquad    *        &\qquad  0.30      \\
                  &\qquad $ 16$   &\qquad  9.0762e-3  &\qquad    2.03     &\qquad  0.31    \\
   $0.05$         &\qquad $ 32$   &\qquad  2.2562e-3  &\qquad    2.01     &\qquad  0.34     \\
                  &\qquad $ 64$   &\qquad  5.6251e-4  &\qquad    2.00     &\qquad  0.50    \\
                  &\qquad $ 8$    &\qquad  3.2422e-2  &\qquad    *        &\qquad  0.32      \\
                  &\qquad $ 16$   &\qquad  7.9309e-3  &\qquad    2.03     &\qquad  0.32    \\
   $0.35$         &\qquad $ 32$   &\qquad  1.9703e-3  &\qquad    2.01     &\qquad  0.35     \\
                  &\qquad $ 64$   &\qquad  4.9003e-4  &\qquad    2.01     &\qquad  0.55    \\
                  &\qquad $ 8$    &\qquad  2.7408e-2  &\qquad    *        &\qquad  0.30      \\
                  &\qquad $ 16$   &\qquad  6.7136e-3  &\qquad    2.03     &\qquad  0.33    \\
   $0.65$         &\qquad $ 32$   &\qquad  1.6686e-3  &\qquad    2.01     &\qquad  0.39     \\
                  &\qquad $ 64$   &\qquad  4.1499e-4  &\qquad    2.01     &\qquad  0.52    \\
                  &\qquad $ 8$    &\qquad  2.7274e-2  &\qquad    *        &\qquad  0.30      \\
                  &\qquad $ 16$   &\qquad  6.5915e-3  &\qquad    2.05     &\qquad  0.31    \\
   $0.95$         &\qquad $ 32$   &\qquad  1.6340e-3  &\qquad    2.01     &\qquad  0.35     \\
                  &\qquad $ 64$   &\qquad  4.0701e-4  &\qquad    2.01     &\qquad  0.54    \\
     \hline
   \end{tabular}
 \end{table}

%%%%%%%%%%%%%%%%%%%%%%%%%%%%%%%%%%%%%%%%%%%%%%%%%%%%%%%%%%%%%%%%%%%%%%%%%%%%%%%%%%%%%%%%%%%%%%%%%%%%

  \vskip 1mm
  \textbf{Example 2.} Herein, we present the exact solution by
   \begin{equation}\label{eq5.5}
     \begin{split}
          u(x,t) = \frac{t^{\alpha}}{\Gamma(\alpha+1)} \sin \pi x,
     \end{split}
     \end{equation}
which is more singular than the previous example. Therefore $u_0(x)=0$ and
   \begin{equation}\label{eq5.6}
     \begin{split}
          f(x,t) &= \left( \frac{\pi^2t^{2\alpha}}{\Gamma(2\alpha+1)}  + \frac{t^{\alpha-1}}{\Gamma(\alpha)}   \right)\sin \pi x + \frac{\pi t^{2\alpha}}{2[\Gamma(\alpha+1)]^2} \sin 2\pi x.
     \end{split}
     \end{equation}

\vskip 0.2mm
Herein, for $t>0$, we select $f^{n-\frac{1}{2}}$ by \eqref{eq5.1} in order that the singular behaviour of $f$ does not effect the convergence order (see \cite{McLean}). Besides, we see that
   \begin{equation}\label{eq5.7}
     \begin{split}
          t \|u_{xx}'(\cdot, t)\| + t^2\|u_{xx}''(\cdot, t)\|\leq  \mathcal{M}t^{\alpha},
     \end{split}
     \end{equation}
which illustrates that Theorem \ref{th4.3} holds when $\sigma=1+\alpha$. In Table \ref{tb7}, the results of a uniform mesh report expected temporal convergence orders $k^{1+\alpha}$. And then, with different $\alpha=0.25,0.50,0.75,0.95$, Table \ref{tb8} shows $L^2$ errors, temporal convergence rates and CPU time for $J=512$ and the graded mesh ($\gamma=\frac{2}{\alpha+1}$) when $f^{n-\frac{1}{2}}$ is given by \eqref{eq5.1}, which shows the second-order convergence for time, as predicted.

\vskip 0.2mm
Then for disparate $\alpha=0.05,0.35,0.65,0.95$, Table \ref{tb9} shows the second-order accuracy for space when $N=512$ and a uniform mesh. This point is also demonstrated in Table \ref{tb10} with $\alpha=0.01,0.39,0.69,0.99$, respectively, when $N=256$ and $\gamma=\frac{2}{\alpha+1}$. These all validate the theoretical results.

 \begin{table}
   \centering\small
   \caption{The $L^2$ errors, temporal convergence orders and CPU time (seconds) for $J=512$ and a uniform mesh ($\gamma=1$)
   when $f^{n-1/2}$ is presented by \eqref{eq5.1}.}\label{tb7}
 \begin{tabular}{c c c c c c c}\hline
  $\alpha$  &\qquad $ N $  &\qquad  $E_{CN}$    &\qquad  $Rate^t$   &\qquad  CPU(s)   \\ \hline
                  &\qquad $ 16$    &\qquad  5.4117e-4  &\qquad    *        &\qquad  1.37      \\
                  &\qquad $ 32$    &\qquad  2.0346e-4  &\qquad    1.41     &\qquad  2.57     \\
   $0.25$         &\qquad $ 64$    &\qquad  8.1480e-5  &\qquad    1.32     &\qquad  6.25     \\
                  &\qquad $ 128$   &\qquad  3.4386e-5  &\qquad    1.24     &\qquad  13.04    \\
                  &\qquad $ 16$    &\qquad  2.6227e-3  &\qquad    *        &\qquad  1.44      \\
                  &\qquad $ 32$    &\qquad  8.7778e-4  &\qquad    1.58     &\qquad  2.86     \\
   $0.50$         &\qquad $ 64$    &\qquad  2.9870e-4  &\qquad    1.55     &\qquad  6.44     \\
                  &\qquad $ 128$   &\qquad  1.0389e-4  &\qquad    1.52     &\qquad  13.40    \\
                  &\qquad $ 16$    &\qquad  3.0356e-3  &\qquad    *        &\qquad  1.59      \\
                  &\qquad $ 32$    &\qquad  9.2793e-4  &\qquad    1.71     &\qquad  2.83     \\
   $0.75$         &\qquad $ 64$    &\qquad  2.7537e-4  &\qquad    1.75     &\qquad  5.71     \\
                  &\qquad $ 128$   &\qquad  8.1873e-5  &\qquad    1.75     &\qquad  12.78    \\
                  &\qquad $ 16$    &\qquad  1.7302e-3  &\qquad    *        &\qquad  1.48      \\
                  &\qquad $ 32$    &\qquad  4.5492e-4  &\qquad    1.93     &\qquad  2.60     \\
   $0.95$         &\qquad $ 64$    &\qquad  1.1830e-4  &\qquad    1.94     &\qquad  5.42     \\
                  &\qquad $ 128$   &\qquad  3.0877e-5  &\qquad    1.94     &\qquad  12.53    \\
     \hline
   \end{tabular}
 \end{table}

 \begin{table}
   \centering\small
   \caption{The $L^2$ errors, temporal convergence orders and CPU time (seconds) for $J=512$ and a graded mesh ($\gamma=\frac{2}{\alpha+1}$)
    when $f^{n-1/2}$ is given by \eqref{eq5.1}.}\label{tb8}
 \begin{tabular}{c c c c c c c}\hline
  $\alpha$  &\qquad $ N $  &\qquad  $E_{CN}$    &\qquad  $Rate^t$   &\qquad  CPU(s)   \\ \hline
                  &\qquad $ 8$    &\qquad  7.2543e-4  &\qquad    *        &\qquad  0.74      \\
                  &\qquad $ 16$   &\qquad  1.6705e-4  &\qquad    2.12     &\qquad  1.35     \\
   $0.25$         &\qquad $ 32$   &\qquad  4.0837e-5  &\qquad    2.03     &\qquad  2.50     \\
                  &\qquad $ 64$   &\qquad  1.1431e-5  &\qquad    1.84     &\qquad  5.46    \\
                  &\qquad $ 8$    &\qquad  3.5297e-3  &\qquad    *        &\qquad  0.86      \\
                  &\qquad $ 16$   &\qquad  7.7871e-4  &\qquad    2.18     &\qquad  1.72     \\
   $0.50$         &\qquad $ 32$   &\qquad  1.7826e-4  &\qquad    2.13     &\qquad  2.88     \\
                  &\qquad $ 64$   &\qquad  4.2639e-5  &\qquad    2.06     &\qquad  5.97    \\
                  &\qquad $ 8$    &\qquad  6.0540e-3  &\qquad    *        &\qquad  0.81      \\
                  &\qquad $ 16$   &\qquad  1.6480e-3  &\qquad    1.88     &\qquad  1.53     \\
   $0.75$         &\qquad $ 32$   &\qquad  4.2152e-4  &\qquad    1.97     &\qquad  2.68     \\
                  &\qquad $ 64$   &\qquad  1.0608e-4  &\qquad    1.99     &\qquad  5.25    \\
                  &\qquad $ 8$    &\qquad  5.8041e-3  &\qquad    *        &\qquad  0.85      \\
                  &\qquad $ 16$   &\qquad  1.5193e-3  &\qquad    1.93     &\qquad  1.52     \\
   $0.95$         &\qquad $ 32$   &\qquad  3.8612e-4  &\qquad    1.98     &\qquad  2.67     \\
                  &\qquad $ 64$   &\qquad  9.7184e-5  &\qquad    1.99     &\qquad  5.78    \\
     \hline
   \end{tabular}
 \end{table}

 \begin{table}
   \centering\small
   \caption{The $L^2$ errors, spatial convergence orders and CPU time (seconds) for $N=512$ and a uniform mesh ($\gamma=1$)
    when $f^{n-1/2}$ is given by \eqref{eq5.1}.}\label{tb9}
 \begin{tabular}{c c c c c c c}\hline
  $\alpha$  &\qquad $ J   $  &\qquad  $E_{CN}$    &\qquad  $Rate^x$   &\qquad  CPU(s)   \\ \hline
                  &\qquad $ 8$    &\qquad  9.7674e-3  &\qquad    *        &\qquad  1.12      \\
                  &\qquad $ 16$   &\qquad  2.4272e-3  &\qquad    2.01     &\qquad  1.22     \\
   $0.05$         &\qquad $ 32$   &\qquad  6.0618e-4  &\qquad    2.00     &\qquad  1.30     \\
                  &\qquad $ 64$   &\qquad  1.5179e-4  &\qquad    2.00     &\qquad  1.82    \\
                  &\qquad $ 8$    &\qquad  1.0983e-2  &\qquad    *        &\qquad  1.06      \\
                  &\qquad $ 16$   &\qquad  2.7335e-3  &\qquad    2.01     &\qquad  1.10     \\
   $0.35$         &\qquad $ 32$   &\qquad  6.8844e-4  &\qquad    1.99     &\qquad  1.19     \\
                  &\qquad $ 64$   &\qquad  1.7826e-4  &\qquad    1.95     &\qquad  1.86    \\
                  &\qquad $ 8$    &\qquad  1.1791e-2  &\qquad    *        &\qquad  1.10      \\
                  &\qquad $ 16$   &\qquad  2.9364e-3  &\qquad    2.01     &\qquad  1.20     \\
   $0.65$         &\qquad $ 32$   &\qquad  6.3968e-4  &\qquad    1.99     &\qquad  1.21     \\
                  &\qquad $ 64$   &\qquad  1.9159e-4  &\qquad    1.95     &\qquad  1.90    \\
                  &\qquad $ 8$    &\qquad  1.0631e-2  &\qquad    *        &\qquad  1.07      \\
                  &\qquad $ 16$   &\qquad  2.6397e-3  &\qquad    2.01     &\qquad  1.11     \\
   $0.95$         &\qquad $ 32$   &\qquad  6.5930e-4  &\qquad    2.00     &\qquad  1.21     \\
                  &\qquad $ 64$   &\qquad  1.6499e-4  &\qquad    2.00     &\qquad  1.73    \\
     \hline
   \end{tabular}
 \end{table}

 \begin{table}
   \centering\small
   \caption{The $L^2$ errors, spatial convergence orders and CPU time (seconds) for $N=256$ and a graded mesh ($\gamma=\frac{2}{\alpha+1}$)
    when $f^{n-1/2}$ is given by \eqref{eq5.1}.}\label{tb10}
 \begin{tabular}{c c c c c c c}\hline
  $\alpha$  &\qquad $ J   $  &\qquad  $E_{CN}$    &\qquad  $Rate^x$   &\qquad  CPU(s)   \\ \hline
                  &\qquad $ 4$    &\qquad  3.9177e-2  &\qquad    *        &\qquad  0.27      \\
                  &\qquad $ 8$    &\qquad  9.5538e-3  &\qquad    2.04     &\qquad  0.28     \\
   $0.01$         &\qquad $ 16$   &\qquad  2.3740e-3  &\qquad    2.01     &\qquad  0.30     \\
                  &\qquad $ 32$   &\qquad  5.9263e-4  &\qquad    2.00     &\qquad  0.32    \\
                  &\qquad $ 4$    &\qquad  4.5697e-2  &\qquad    *        &\qquad  0.28      \\
                  &\qquad $ 8$    &\qquad  1.1112e-2  &\qquad    2.04     &\qquad  0.29     \\
   $0.39$         &\qquad $ 16$   &\qquad  2.7597e-3  &\qquad    2.01     &\qquad  0.30     \\
                  &\qquad $ 32$   &\qquad  6.8939e-4  &\qquad    2.00     &\qquad  0.34    \\
                  &\qquad $ 4$    &\qquad  4.8230e-2  &\qquad    *        &\qquad  0.29      \\
                  &\qquad $ 8$    &\qquad  1.1778e-2  &\qquad    2.03     &\qquad  0.29     \\
   $0.69$         &\qquad $ 16$   &\qquad  2.9308e-3  &\qquad    2.01     &\qquad  0.30     \\
                  &\qquad $ 32$   &\qquad  7.3527e-4  &\qquad    1.99     &\qquad  0.33    \\
                  &\qquad $ 4$    &\qquad  4.2250e-2  &\qquad    *        &\qquad  0.29      \\
                  &\qquad $ 8$    &\qquad  1.0310e-2  &\qquad    2.03     &\qquad  0.29     \\
   $0.99$         &\qquad $ 16$   &\qquad  2.5634e-3  &\qquad    2.01     &\qquad  0.30     \\
                  &\qquad $ 32$   &\qquad  6.3922e-4  &\qquad    2.00     &\qquad  0.34    \\
     \hline
   \end{tabular}
 \end{table}

%%%%%%%%%%%%%%%%%%%%%%%%%%%%%%%%%%%%%%%%%%%%%%%%%%%%%%%%%%%%%%%%%%%%%%%%%%%%%%%%%%%%%%%%%%%%%

\section{Concluding remarks}

  \vskip 0.2mm
     In this work, an implicit difference scheme has been constructed and analyzed for the nonlinear partial integro-differential equation with a weakly singular kernel. For compensating the singular behavior of the exact solution $u(\cdot,t)$ at $t=t_0$, the non-uniform meshes have been proposed. Then, the discrete energy method was used to derive the stability and convergence of the fully discrete implicit difference scheme. And the existence and uniqueness of numerical solutions were proved based on partly Leray-Schauder theorem. In order to compute proposed implicit difference scheme, an iterative algorithm was employed to obtain approximation solutions. Finally, numerical results have been given to confirm the predicted space-time convergence rates of our approach, which is consistent with the theory.

  \vskip 0.2in
\noindent \textbf{Conflict of interest:} The authors declare that they have no conflict of interest.

%\section*{Acknowledgement}

%%%%%%%%%%%%%%%%%%%%%%%%%%%%%%%%%%%%%%%%%%%%%%%%%%%%%%%%%%%%%%%%%%%%%%%%%%%%%%%%%%%%%%%%%%%%%%%%%%%%%
%\section*{References}

\end{document}